\theoremstyle{plain}
\newtheorem{theorem}{Theorem}[section]
\newtheorem*{theorem*}{Theorem}
\newtheorem*{maintheorem-intro}{Theorem}
\newtheorem*{maintheorem-intro-2}{Theorem~\ref{Bridge number and genus}}
\newtheorem*{theorem-cablingconj}{Theorem~\ref{apps1} (1)}
\newtheorem*{theorem-toroidal}{Specialization of Theorem~\ref{apps1} (2)}
\newtheorem*{theorem-lens}{Theorem~\ref{Bounding distance - special}(1)}
\newtheorem*{theorem-SFS}{Theorem~\ref{Bounding distance - special}(2)}
\newtheorem*{theorem-cosmetic}{Theorem}
\newtheorem*{theorem-bridge}{Specialization of Corollary~\ref{Cor: exceptional bridge}}
\newtheorem*{theorem-Heeggenus}{Corollary~\ref{Cor: exceptional bridge} (2)}
\newtheorem{proposition}[theorem]{Proposition}
\newtheorem{corollary}[theorem]{Corollary} 
\newtheorem{lemma}[theorem]{Lemma}
\theoremstyle{definition}
\newtheorem{definition}[theorem]{Definition}
\newtheorem{conjecture}{Conjecture}
\newtheorem*{ackn}{Acknowledgements}
\theoremstyle{definition}
\newcommand{\R}{{\mathbb R}}
\newcommand{\Z}{\mathbb Z}
\newcommand{\lat}{\mathbb{L}^3}
\newcommand{\bi}{\begin{itemize}}
\newcommand{\ei}{\end{itemize}}
\newcommand{\be}{\begin{enumerate}}
\newcommand{\ee}{\end{enumerate}}
\newcommand{\defn}[1]{\emph{#1}}
\begin{document}
   % title

   \title[Vertex Distortion of Lattice Knots]{Vertex Distortion of Lattice Knots}
  
   \author{Marion Campisi}
 \email{marion.campisi@sjsu.edu}
 \author{Nicholas Cazet}
 \email{nccazet@ucdavis.edu}

   % Note that the short title for running heads goes in square
   % brackets.  This is optional.  The long title goes in curly
   % braces.  In the long title, line breaks are indicated by \\

\begin{abstract}
The vertex distortion of a lattice knot is the supremum of the ratio of the distance between a pair of vertices along the knot and their distance in the $\ell_1$-norm.  Inspired by Gromov, Pardon and Blair-Campisi-Taylor-Tomova, we show that results about the distortion of smooth knots hold for vertex distortion:  the vertex distortion of a lattice knot is 1 only if it is the unknot, and there are minimal lattice-stick number knot conformations with arbitrarily high distortion.
\end{abstract}
\maketitle
\date{\today}

\section{Introduction }

A \defn{polygonal knot} is a knot that consists of line segments called \defn{sticks}.  A \defn{lattice knot} is a polygonal knot in the cubic lattice $\mathbb{L}^3=(\R\times\Z\times\Z)\cup(\Z\times\R\times\Z)\cup (\Z\times\Z\times\R).$  All knots and curves in this paper are taken to be tame.

 The \defn{vertex set} of a lattice knot $K$, denoted $V(K)$, is the set of points ${\bf p} \in K\cap (\Z \times \Z \times \Z)$. The vertex set of a tame lattice knot is finite. Points in $\mathbb{L}^3$ will be denoted by ${\bf p}=(p_1,p_2,p_3)$.

For smoothly embedded knots one can assign a value called the {\it distortion}: \[ \delta(K)=\sup\limits_{a,b\in K} \frac{d_K(a,b)}{d(a,b)}.\]

Here, we define the \defn{vertex distortion} of a lattice knot $K$ in $\lat$ as

\[
\delta_V(K)=\sup\limits_{{\bf p,q} \in V(K)} \frac{d_{K}({\bf p,q} )}{d_1({\bf p,q})}=\max\limits_{{\bf p,q}\in V(K)} \frac{d_{K}({\bf p,q})}{d_1({\bf p,q})}
\]
 where $d_{K}({\bf p,q})$ denotes the shorter of the two injective paths from ${\bf p}$ to ${\bf q}$ along $K$ and $d_1({\bf p,q})=\sum_{i=1}^3 |p_i-q_i|$, the $\ell_1-$metric. Unlike the distortion of general rectifiable curves, as introduced by Gromov \cite{gromov1983}, the supremum may be replaced with the maximum in our vertex distortion of lattice knots: the set $V(K)$ is finite. We can turn this into a knot invariant by defining for each lattice knot type $[K]$ 
\[
\delta_V([K])=\inf\limits_{K \in [K]} \delta_V(K)
\]
where the infimum is taken over all lattice conformations  $K$  representing the knot type $[K]$.

 The distortion of smooth knots has proven to be a challenging quantity to analyze.  Gromov  showed that $\delta(K)\geq \frac{\pi}{2}$ with equality if and only if $K$ is the standard round circle \cite{gromov1983}. Moreover, Denne and Sullivan showed that $\delta(K)\geq \frac{5\pi}{3}$ whenever $K$ is not the unknot \cite{denne2004distortion}.

In 1983, Gromov  \cite{gromov1983} asked if there is a universal upper bound on $\delta(K)$ for all knots $K$. Pardon \cite{Pardon_2011} answered this question negatively when he showed that the distortion of a knot type is bounded below by a quantity proportional to a certain topological invariant, called representativity.  Blair, Campisi, Taylor, and Tomova showed that distortion is bounded below by bridge number and bridge distance, and exhibit an infinite family of knots for which their bound is arbitrarily stronger than Pardon's.

We show that analogous to \cite{gromov1983}, 

\begin{theorem}
\label{thm:Unknot}
For any knot $K$ in the cubic lattice, if $\delta_V(K)=1$, then $K$ is the unknot.  
%\nicholas{This isn't true, take a rectangle with width two and height one, then the distortion is 3 but it's the unknot. ``ONLY IF" is the only direction that works"}
\end{theorem}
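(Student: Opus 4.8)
\emph{Plan.} I will prove the contrapositive in a sharp form: if $\delta_V(K)=1$, then $K$ lies in the $1$-skeleton of a single unit cube, hence on a $2$-sphere, hence is the unknot. Write $n=|V(K)|$ and list the vertices cyclically $v_0,v_1,\dots,v_{n-1}$ in the order they occur along $K$. Since $K\subset\lat$, every corner of $K$ is a lattice point and every stick has integer length, so consecutive $v_i$ are at $\ell_1$-distance $1$, the length of $K$ is exactly $n$, and $d_K(v_i,v_j)=\min\bigl(|i-j|,\,n-|i-j|\bigr)$. A unit step flips the parity of the coordinate sum, so $n$ is even. Finally, the length of any path in $\lat$ is at least the $\ell_1$-distance of its endpoints, so $\delta_V(K)\ge 1$ always, and $\delta_V(K)=1$ says precisely that $i\mapsto v_i$ is an isometric embedding of the cycle graph $C_n$ into $(\Z^3,d_1)$.

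The engine of the proof is a total-variation identity. Let $\pi_1,\pi_2,\pi_3$ be the coordinate projections, and for a sub-arc $\gamma$ of $K$ let $\operatorname{TV}_\gamma(\pi_c)$ denote the total variation of $\pi_c$ along $\gamma$. Decomposing $\gamma$ into its unit segments $[v_k,v_{k+1}]$, each changes exactly one coordinate by $\pm1$, so $\ell(\gamma)=\sum_{c=1}^3\operatorname{TV}_\gamma(\pi_c)$; hence if $p,q$ are the endpoints of $\gamma$ then $\ell(\gamma)\ge\sum_c|\pi_c(p)-\pi_c(q)|=d_1(p,q)$, with equality exactly when $\gamma$ is monotone in each coordinate. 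Applying this to all of $K$ gives $\sum_c\operatorname{TV}_K(\pi_c)=n$, and for the closed curve $K$ one has $\operatorname{TV}_K(\pi_c)\ge 2\bigl(\max\pi_c-\min\pi_c\bigr)$ for each $c$ (cut the cycle at a maximum and a minimum of $\pi_c$ and apply the triangle inequality to each of the two arcs).

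Now assume $\delta_V(K)=1$ and fix $i$. The two arcs of $K$ from $v_i$ to $v_{i+n/2}$ each have length $n/2$, so $d_K(v_i,v_{i+n/2})=n/2$; since $i\mapsto v_i$ is an isometry, $d_1(v_i,v_{i+n/2})=n/2$ as well, so \emph{both} arcs attain the $\ell_1$-distance and are therefore monotone in every coordinate. Consequently $\operatorname{TV}_K(\pi_c)=2\,|\pi_c(v_{i+n/2})-\pi_c(v_i)|$ for every $c$ and every $i$. The left-hand side is independent of $i$, so $|\pi_c(v_{i+n/2})-\pi_c(v_i)|$ is a constant equal to $\tfrac12\operatorname{TV}_K(\pi_c)$; combining this with $\operatorname{TV}_K(\pi_c)\ge 2(\max\pi_c-\min\pi_c)$ and with $|\pi_c(v_{i+n/2})-\pi_c(v_i)|\le\max\pi_c-\min\pi_c$ forces all three quantities to be equal. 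In particular every $\pi_c(v_i)$ is an extreme value of $\pi_c$; since consecutive vertices differ by at most $1$ in each coordinate, this is impossible unless $\max\pi_c-\min\pi_c\le1$. After a translation, $K\subset\{0,1\}^3$, so $K$ lies in the edge-graph of a unit cube, hence on the boundary $2$-sphere of that cube, and a simple closed curve on a $2$-sphere in $S^3$ bounds a disk and is the unknot. (A byproduct: the conformations with $\delta_V=1$ are exactly the unit square and the hexagonal section of a cube, up to symmetry.)

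The step I expect to carry the weight is the rigidity in the third paragraph: the two inequalities used there are elementary, but the decisive observation is that imposing the geodesic condition on \emph{every} antipodal pair simultaneously pins $\operatorname{TV}_K(\pi_c)$ to twice a number that cannot depend on the pair, and that is precisely what squeezes $K$ into a unit cube. The only other point needing care is the opening normalization --- that $K\subset\lat$ makes the vertices unit-spaced, $\ell(K)=n$, and $n$ even --- which is what makes ``the antipodal vertex $v_{i+n/2}$'' meaningful and the identity $d_K(v_i,v_j)=\min(|i-j|,n-|i-j|)$ correct on the nose.
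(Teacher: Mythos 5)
Your proof is correct and follows essentially the same route as the paper's: even total length yields an antipodal partner for each vertex, $\delta_V(K)=1$ forces both antipodal arcs to be $\ell_1$-geodesic (coordinate-monotone) staircase walks, and this pins every coordinate of every vertex to an extreme value, so $K$ lies on a $2$-sphere (the boundary of its bounding box) and is unknotted. Your total-variation bookkeeping additionally squeezes the bounding box down to a unit cube, which recovers the paper's subsequent corollary classifying the distortion-one conformations, but the underlying mechanism is the same.
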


and like \cite{Pardon_2011} and  \cite{Blair_2020},

\begin{theorem}
\label{thm:Unbounded}
There exists a sequence of minimal lattice-stick number torus knots, $\mathcal{K}_{2k}$,  with $\delta_V(\mathcal{K}_{2k})\rightarrow \infty$ as $k\rightarrow \infty$.
\end{theorem}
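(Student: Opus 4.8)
The plan is to run the lattice analogue of Pardon's argument for smooth torus knots. For each $p \ge 2$ I produce an explicit lattice conformation $K_p$ of the $(p,p+1)$--torus knot that attains the minimal lattice--stick number $s_L(T_{p,p+1})$, together with two vertices $a_p, b_p \in V(K_p)$ whose $\ell_1$--distance stays bounded as $p \to \infty$ while $d_{K_p}(a_p,b_p) \to \infty$. Since $\delta_V(K_p) = \max_{u,v \in V(K_p)} d_{K_p}(u,v)/d_1(u,v)$, a single such pair already forces $\delta_V(T_{p,p+1}) = \delta_V(K_p) \to \infty$, which is what the statement asserts.

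First I fix the conformations. I take $K_p$ to be one of the minimal lattice--stick conformations of torus knots supplied by the literature on lattice stick numbers; if no reference is in exactly the form wanted, one builds $K_p$ by hand as a lattice version of the standard torus--knot diagram and invokes the known lower bound on $s_L(T_{p,p+1})$ to certify minimality. What I use about $K_p$ is its torus--knot shape: there is an axis--parallel plane $P$ meeting $K_p$ transversally in $k := p+1$ points $x_1, \dots, x_k$, which I label in the cyclic order in which they occur along $K_p$, with the two features (i) each of the $k$ subarcs $A_1, \dots, A_k$ of $K_p$ between consecutive $x_i$ winds once longitudinally around the core of the solid torus carrying $K_p$, so $\sum_i \ell(A_i) = \ell(K_p)$, where $\ell(\cdot)$ denotes total stick length; and (ii), just as for the smooth model of $T_{p,p+1}$, the order of $x_1, \dots, x_k$ along $K_p$ agrees with their cyclic order around the meridian circle $P \cap \partial(\text{solid torus})$, so $x_i$ and $x_{i+1}$ occupy neighbouring lattice sites of that circle and $d_1(x_i,x_{i+1})$ is bounded independently of $p$.

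Next, the wrap $A_1$ is long: since $k = p+1$ distinct lattice points lie on the meridian circle at mutual $\ell_1$--distance at least $1$, that circle, and hence the tube radius and the core radius of the solid torus, is $\gtrsim p$, so $\ell(A_i) \gtrsim p$ for every $i$. (If one prefers to avoid this, it is enough that $\ell(K_p) \to \infty$ — immediate, because the crossing number of $T_{p,p+1}$ is $p^2-1$ and the crossing number of a knot is at most the square of the number of sticks in any conformation — together with $\ell(A_i) \asymp \ell(K_p)/k$ from (i) and the superlinear growth of $\ell(K_p)$ read off the construction.) Now let $a_p, b_p$ be the vertices of $K_p$ nearest $x_1, x_2$; this perturbs $\ell_1$-- and $d_{K_p}$--distances by $O(1)$, so $d_1(a_p,b_p) = O(1)$ while $d_{K_p}(a_p,b_p) \ge \ell(A_1) - O(1) \gtrsim p$, and therefore
\[
\delta_V(T_{p,p+1}) \ \ge\ \frac{d_{K_p}(a_p,b_p)}{d_1(a_p,b_p)} \ \gtrsim\ p \ \longrightarrow\ \infty .
\]
Being a lattice--stick--minimal conformation of a nontrivial knot, each $K_p$ is reduced (no stick may be deleted), so the family $K_p$ satisfies all conditions of the statement.

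The step I expect to be the real obstacle is making features (i) and (ii) precise for one explicit family: one must commit to particular conformations $K_p$, confirm against the literature that they realize $s_L(T_{p,p+1})$, and verify the combinatorial claim that the $k$ points of $K_p \cap P$ occur along $K_p$ in their circular order on the meridian circle — equivalently, that the two of them at neighbouring lattice sites are joined in $K_p$ only by a full longitudinal wrap and by no shorter subarc. A minor additional point, needed only on the second route to the length bound, is the superlinear estimate $\ell(K_p) = \omega(p)$, which must come from the explicit construction since the crossing--number argument alone gives only $\ell(K_p) = \Omega(p)$.
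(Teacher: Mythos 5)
There is a genuine gap, and you have named it yourself: the ``real obstacle'' you defer---committing to explicit conformations $K_p$ and verifying features (i) and (ii)---is not a detail but the entire content of the theorem. The statement asserts the \emph{existence} of a specific family of reduced, stick-minimal lattice conformations with unbounded vertex distortion, so a proof must actually exhibit one. No reference supplies minimal lattice-stick conformations of $T_{p,p+1}$ with the structural properties you require; the paper has to build them by hand (an explicit stick-length/stick-type tabulation with $6p$ sticks), verify that the tabulation is embedded, verify that it is the $(p,p+1)$-torus knot by fitting it onto a toroidal polyhedron, and certify minimality via the bound $s_{CL}([K])\geq 6b[K]$ together with $b[T_{p,p+1}]=p$. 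Your features (i) and (ii) are plausible for the smooth model of $T_{p,p+1}$, but whether the $p+1$ strands through a lattice meridian disk occur along the knot in their cyclic order around that disk, and whether consecutive ones sit at $\ell_1$-distance $O(1)$, depends entirely on the conformation you never construct. Without that, the displayed inequality $\delta_V \gtrsim p$ has no content.

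Two smaller problems. First, your claim that stick-number minimality implies the conformation is ``reduced'' does not match what is needed: reducedness here concerns the impossibility of shortening sticks (retracting an ``L'' to decrease edge length and the bounding box), which is independent of having the minimal \emph{number} of sticks; the paper must argue this separately for its conformations. Second, your argument, if completed, would yield only a linear lower bound $\delta_V(K_p)\gtrsim p$; that suffices for the theorem as stated, but note that with an explicit conformation in hand one gets a clean quadratic bound by locating a single pair of vertices at $\ell_1$-distance $1$ whose shorter connecting arc has length on the order of $p^2$ (a constant fraction of the total edge length $5p^2+3p-2$), which is both stronger and far easier to verify than the winding/ordering claims you would otherwise need.
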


This paper is structured as follows: Section \ref{section:defns} contains relevant definitions and background. Theorem \ref{thm:Unknot} is proved in Section \ref{section:Unknot}. Section \ref{section:Lattice Torus Knots} tabulates a family of knots used to prove Theorem \ref{thm:Unbounded}. Section \ref{section:torusverification} establishes properties of these knots. In Section \ref{section:main}, the knots are used to prove Theorem \ref{thm:Unbounded}.

\begin{ackn} The authors would like to thank the referee for their careful reading and helpful suggestions.  
\end{ackn}

\section{Definitions}
\label{section:defns}

For the duration of the paper, all knots $K$  are taken to be orientated lattice knots, and $[K]$ is the class of lattice knots isotopic to $K$.

\begin{definition}
A {\it vertex} of $K$ is any point in the vertex set $V(K)=K\cap\mathbb{Z}^3$.
\end{definition}

%Let $r:[0,1]\to\lat$ be a parameterization of a lattice knot. Then $r'(t_0)$ does not exist for finitely many $t_0\in[0,1]$. The tangent vector is undefined at 

\begin{definition} A \defn{stick} of a lattice knot $K$ is a maximal line segment of $K$ in $\lat$, i.e. a line segment contained in $K$ that is not contained in any longer line segment of $K$. \end{definition}

\begin{definition}
For a lattice knot $K$, the endpoints of each stick are vertices of $K$; such a vertex is called a \defn{critical vertex} of the lattice knot. \end{definition}

To clarify, a vertex of lattice knot $K$ is any point in $K \cap \mathbb{Z}^3$ while a critical vertex of $K$ is a point where two consecutive sticks intersect. Consecutive sticks meet at a corner of the polygon that represents $K$, forming a ninety-degree angle. Every critical vertex of $K$ is a vertex of $K$.

Each stick of length $n$ can be decomposed into $n$ unit length line segments.

\begin{definition} An \defn{edge} of a lattice knot is a unit length line segment, subset of a stick, connecting consecutive vertices. \end{definition}

\noindent Consecutive edges are parallel, when they belong to the same stick, or perpendicular, when their intersection is a critical vertex.

\begin{definition} The \defn{edge length} of a lattice knot $K$, denoted $e_L(K)$, is the total number of edges in $K$. \end{definition}

 Endow a lattice knot with an orientation. Then, there is a bijection between the set of edges and the set of vertices by sending each edge to its terminal point. Thus, $|V(K)|=e_L(K)$.  
 
 \begin{definition} The lattice stick number $s_{CL}(K)$ of a lattice stick conformation $K$ is the number of constituent sticks of K. We minimize $s_{CL}(K)$ over all lattice stick conformations $K\in [K]$ and we call this number the lattice stick index $s_{CL}([K])$ of the knot type $[K]$. \end{definition}

Since our lattice knots are taken to be orientated, each stick has a well-defined initial and terminal critical vertex, ${\bf v_0}$ and ${\bf v_t}$ respectively. The point ${\bf v_t}-{\bf v_0}$ is a coordinate triple with zeros in all but one coordinate; the nonzero coordinate's value defines the {\it stick type}: $x^+$, $y^+$, $z^+$, $x^{-}$, $y^{-}$, or $z^{-}$.   

\begin{definition} Let $K$ be an orientated lattice knot. Let ${\bf v_0}$ and ${\bf v_t}$ be the initial and terminal critical vertices of a stick $S\subset K$. We have that ${\bf v_t}-{\bf v_0}=(a,b,c)\in \Z^3$. Then $S$ is an {\it $x^+$-stick} if $a>0$ and a {\it $x^-$-stick} if  $a<0$. A {\it $y^+$-, $y^-$-, $z^+$-,} and {\it $z^-$-stick} is defined analogously where $b$ or $c$ is positive or negative. 
\end{definition}

Heuristically, a stick is parallel to the $x$-, $y$-, or $z$-axis with an orientation dictating whether it has an increasing or decreasing, $x$-, $y$-, or $z$-coordinate, respectively, while all other coordinates are held constant. The coordinate that is changing, along with whether it is increasing or decreasing, gives the stick type.

  Every lattice knot can be described as a sequence of stick lengths and a paired sequence of stick types. Beginning at a critical vertex, the orientation induces an ordering of the sticks types. This orientation also induces a sequence of stick lengths. Every lattice knot can be isometrically translated such that any critical vertex translates to the origin, therefore our convention will be for this construction to begin at the origin.  

\begin{figure}[h]

  \begin{tabular}{ c | *{3}{c} c}

     & $x$ & $y$ & $z$ \\   \cline{1-4}
    1 & 2 & 1 & 3\\ 
    2 & 3 & 2 & 2 &          \hspace{1.5cm} $z^+$, $x^+$, $y^+$, $z^{-}$, $x^{-}$, $y^{-}$, $z^+$, $x^+$, $y^+$, $z^{-}$, $x^{-}$, $y^{-}$\\
      3 & 2 & 3 & 1 \\
         4 & 1 & 2 & 2 \\

\end{tabular}

\caption{Stick length sequence with pairing stick type sequence of the trefoil.}

\label{fig:trefoiltab}

\end{figure} 

The tabulation of stick lengths paired with the stick type sequence in Figure \ref{fig:trefoiltab} gives a construction of the trefoil. The $j^\text{th}$ row of the $x$-column reads the length of the $j^\text{th}$  $x$-stick, similarly for the $y$- and $z$-columns. Beginning at the origin, we see the stick type sequence is initiated by $z^+$-, $x^+$-, and $y^+$-sticks. Since these are the first $x$-, $y$-, and $z$-sticks, we read the lengths of these sticks from the first row, 3, 2, 1, respectively. The following stick type is $z^{-}$. Since this is the second $z$-stick traversed, we read its length from the second row of the table. 

Figure \ref{fig:construct} illustrates the prescribed construction of  \ref{fig:trefoiltab} where critical vertices are shown with squares and generic vertices are shown with dots.

%A square is placed at the critical vertex lying at the origin, the initial point of this construction, and an orientation arrow is shown above the origin exhibiting that the first stick in the sequence is a $z^+$-stick. The vertices of the knot are shown with dots.

\begin{figure}

\begin{overpic}[unit=.65mm,scale=.65]{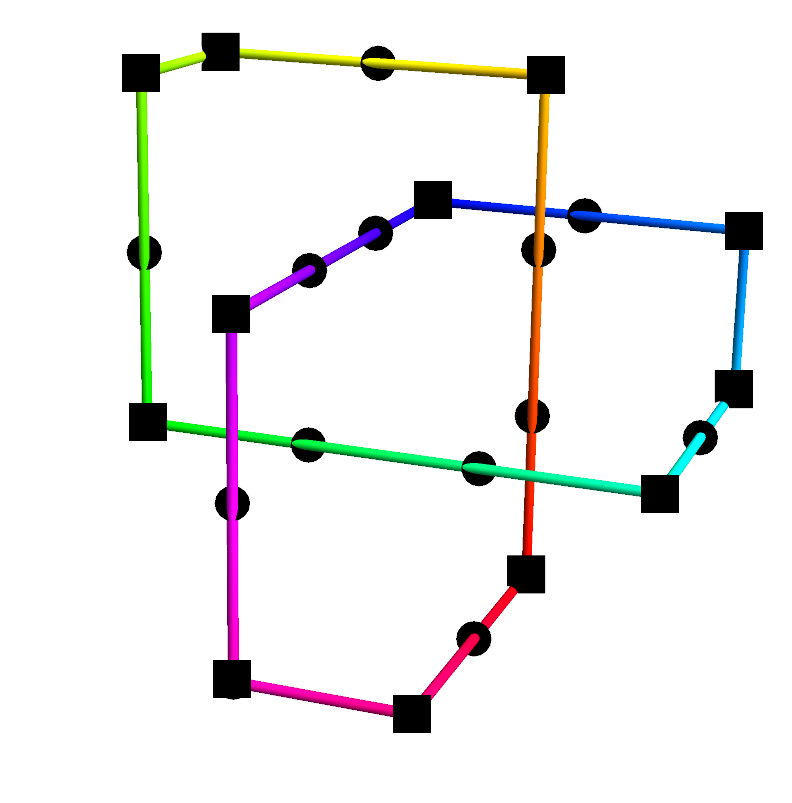}
 \put(-2,30){$s_{CL}(K)=12$}\put(-2,20){$e_{L}(K)=24$}
\end{overpic}
\caption{Trefoil constructed from the tabulation in Figure \ref{fig:trefoiltab}.}
\label{fig:construct}
\end{figure}

%In general, there are as many rows in the tabulation of stick lengths, as seen in Figure \ref{fig:trefoiltab}, as the largest total of either $x$-, $y$-, $z$-sticks. If a knot has more $z$-sticks than $x$- or $y$-sticks, then the $x$ or $y$ columns will be padded with zeros, beginning when the row exceeds the number of $x$- or $y$-sticks. No such padding will be necessary for the knots that we tabulate. 

\begin{definition} A \defn{staircase walk} from ${\bf a}\in \lat$ to ${\bf b}\in \lat$ is a piecewise linear path $${\bf r}:[0,1]\to \lat$$ given by ${\bf r}(t)=(x(t),y(t),z(t))$ such that ${\bf r}(0)={\bf a}$, ${\bf r}(1)={\bf b}$ and each of $x(t)$, $y(t)$, and $z(t)$ is, independently, nondecreasing or nonincreasing. \end{definition}

All possible staircase walks between $(1,1,0)$ and $(3,3,0)$ are shown in Figure \ref{fig:staircase}; since the entry-wise difference of these two points has no change in the $z$-coordinate, all walks are subsets of the $xy$-plane. 

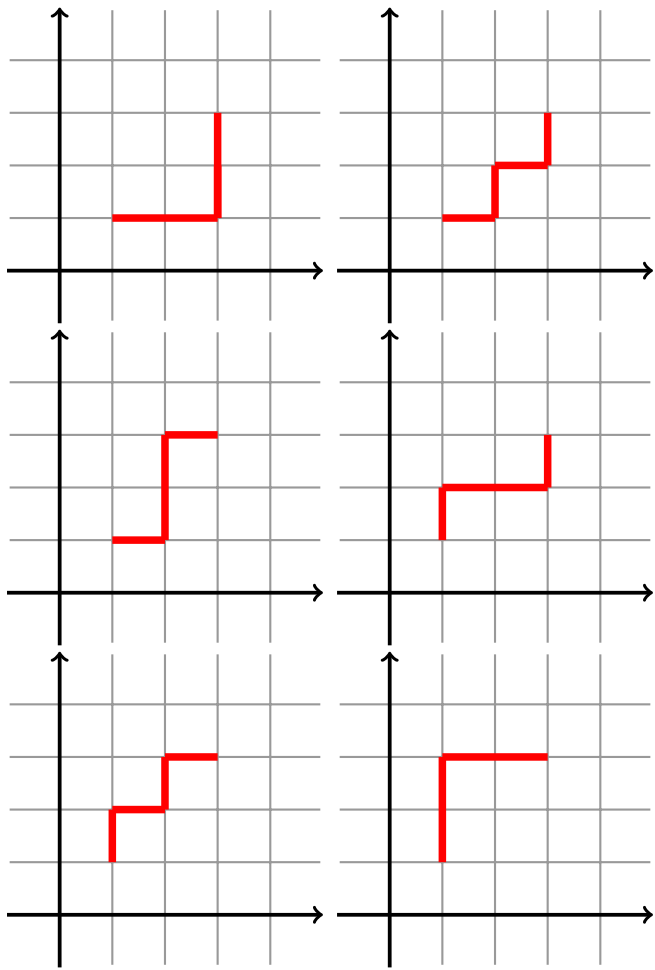
\begin{figure}[]
   \begin{tikzpicture}[scale=.7]

        % help lines
        \draw[step=1,help lines,thick, black!40] (-0.95,-0.95) grid (4.95,4.95);
        % axis
   \draw[line width=0.5mm,->] (-1,0) -- (5,0);
    \draw[line width=0.5mm,->] (0,-1) -- (0,5);
      \draw[line width=1mm,-,red] (1,1)-- (2,1);
        \draw[line width=1mm,-,red] (2,1)-- (3,1);
                \draw[line width=1mm,-,red] (3,1)-- (3,2);
                             \draw[line width=1mm,-,red] (3,1)-- (3,3);
                   
        % points

    \end{tikzpicture}
   \begin{tikzpicture}[scale=.7]

        % help lines
           \draw[step=1,help lines,thick, black!40] (-0.95,-0.95) grid (4.95,4.95);
                   % axis
        \draw[line width=0.5mm,->](-1,0) -- (5,0);
        \draw[line width=0.5mm,->](0,-1) -- (0,5);
         \draw[line width=1mm,-,red] (1,1)-- (2,1);
        \draw[line width=1mm,-,red] (2,1)-- (2,2);
               \draw[line width=1mm,-,red] (2,2)-- (3,2);
                               \draw[line width=1mm,-,red] (3,2)-- (3,3);
                   
        % points

    \end{tikzpicture}
 \begin{tikzpicture}[scale=.7]

        % help lines
           \draw[step=1,help lines,thick, black!40] (-0.95,-0.95) grid (4.95,4.95);
        % axis
        \draw[line width=0.5mm,->] (-1,0) -- (5,0);
        \draw[line width=0.5mm,->] (0,-1) -- (0,5);
        \draw[line width=1mm,-,red] (1,1)-- (2,1);
        \draw[line width=1mm,-,red] (2,1)-- (2,3);
                 \draw[line width=1mm,-,red] (2,3)-- (3,3);

        % points

    \end{tikzpicture}
         \begin{tikzpicture}[scale=.7]

        % help lines
                 \draw[step=1,help lines, thick, black!40] (-0.95,-0.95) grid (4.95,4.95);
        % axis
         \draw[line width=0.5mm,->](-1,0) -- (5,0);
          \draw[line width=0.5mm,->](0,-1) -- (0,5);
         \draw[line width=1mm,-,red] (1,1)-- (1,2);
         \draw[line width=1mm,-,red] (1,2)-- (3,2);
                 \draw[line width=1mm,-,red] (3,2)-- (3,3);

        % points

    \end{tikzpicture}
       \begin{tikzpicture}[scale=.7]

        % help lines
             \draw[step=1,help lines,thick, black!40] (-0.95,-0.95) grid (4.95,4.95);
        % axis
               \draw[line width=0.5mm,->](-1,0) -- (5,0);
        \draw[line width=0.5mm,->](0,-1) -- (0,5);
        \draw[line width=1mm,-,red] (1,1)-- (1,2);
      \draw[line width=1mm,-,red] (1,2)-- (2,2);
               \draw[line width=1mm,-,red] (2,2)-- (2,3);
                
      \draw[line width=1mm,-,red] (2,3)-- (3,3);

        % points

    \end{tikzpicture}
       \begin{tikzpicture}[scale=.7]

        % help lines
            \draw[step=1,help lines,thick, black!40] (-0.95,-0.95) grid (4.95,4.95);
        % axis
         \draw[line width=0.5mm,->](-1,0) -- (5,0);
            \draw[line width=0.5mm,->](0,-1) -- (0,5);
     \draw[line width=1mm,-,red] (1,1)-- (1,3);
         \draw[line width=1mm,-,red] (1,3)-- (3,3);

        % points

    \end{tikzpicture}

\caption{All staircase walks between $(1,1,0)$ and $(3,3,0)$.}
\label{fig:staircase}

\end{figure}
\noindent For ${\bf a}=(a_1,a_2,a_3), {\bf b}=(b_1,b_2,b_3)\in\Z^3$, the number of staircase walks from ${\bf a}$ to ${\bf b}$ is  $$\frac{d_1({\bf a,b})!}{(|a_1-b_1|)!(|a_2-b_2|)!(|a_3-b_3|)!}.$$

 It is important to note that a staircase walk represents a most efficient path in the lattice from one vertex to another, i.e. for ${\bf a,b}\in V(K)$, all staircase walks from ${\bf a}$ to ${\bf b}$ are of length $d_1({\bf a},{\bf b})$ and no lattice path from ${\bf a}$ to ${\bf b}$ has a shorter length.

 \begin{lemma} A path from ${\bf a}\in \Z^3\subset \lat $ to ${\bf b}\in \Z^3\subset \lat$, for ${\bf a} \neq {\bf b}$, has length $d_1({\bf a,b})$ if and only if it is a staircase walk. \end{lemma}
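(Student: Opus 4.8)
The plan is to express the length of an arbitrary path in $\lat$ through its three coordinate functions, and thereby reduce the statement to an elementary one-variable fact. Write the path as $r\co[0,1]\to\lat$, $r(t)=(x(t),y(t),z(t))$, with $r(0)=\mathbf a$ and $r(1)=\mathbf b$, and let $\operatorname{Var}(f)$ denote the total variation of a real function $f$ on $[0,1]$. The first step is to verify the identity
\[
\operatorname{length}(r)=\operatorname{Var}(x)+\operatorname{Var}(y)+\operatorname{Var}(z).
\]
This holds because $\lat$ is a union of segments, each parallel to a coordinate axis, so along $r$ at most one coordinate is ever changing; hence the Euclidean and $\ell_1$ lengths of $r$ coincide, and the $\ell_1$ length is exactly the sum of the total variations of the coordinates. (Concretely, in an arc-length parametrization $r$ is Lipschitz with $|r'|\equiv 1$ and $r'$ parallel to a coordinate axis at a.e.\ point, so $|r'|_{\mathrm{Euclidean}}=|x'|+|y'|+|z'|$ a.e., and integration gives the identity.)

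Given this identity, both directions follow from the trivial bound $\operatorname{Var}(f)\ge|f(1)-f(0)|$ applied to $x,y,z$: it yields $\operatorname{length}(r)\ge|b_1-a_1|+|b_2-a_2|+|b_3-a_3|=d_1(\mathbf a,\mathbf b)$, with equality exactly when $\operatorname{Var}(x)=|b_1-a_1|$, $\operatorname{Var}(y)=|b_2-a_2|$, and $\operatorname{Var}(z)=|b_3-a_3|$ all hold. The key point is then the standard fact that for continuous $f\co[0,1]\to\R$ one has $\operatorname{Var}(f)=|f(1)-f(0)|$ if and only if $f$ is monotone: one direction is immediate, and for the other, if $f$ is not monotone one can find $s<t<u$ with $f(t)$ strictly larger than, or strictly smaller than, both $f(s)$ and $f(u)$, and then the partition $\{0,s,t,u,1\}$ already forces $\operatorname{Var}(f)>|f(1)-f(0)|$ because the detour through $f(t)$ cannot be absorbed into the triangle inequality. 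With this, the lemma is finished: $\operatorname{length}(r)=d_1(\mathbf a,\mathbf b)$ forces each of $x,y,z$ to be monotone, i.e.\ $r$ is a staircase walk (after a reparametrization, if one insists on the piecewise-linearity in the definition); and conversely, if $r$ is a staircase walk its coordinates are monotone, so $\operatorname{Var}(x)=|b_1-a_1|$, $\operatorname{Var}(y)=|b_2-a_2|$, $\operatorname{Var}(z)=|b_3-a_3|$, and the identity gives $\operatorname{length}(r)=d_1(\mathbf a,\mathbf b)$.

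The only real obstacle here is bookkeeping rather than depth: justifying the length identity of the first step for a priori non-PL paths in the $1$-complex $\lat$, and stating the monotonicity criterion of the second step with care. Morally the lemma is the obvious combinatorial fact that a lattice path is shortest exactly when it never backtracks in any coordinate; for a PL path that turns only at points of $\Z^3$ this is literally an edge count --- if $p^{+}$ and $p^{-}$ count the $x^{+}$- and $x^{-}$-unit edges then $p^{+}-p^{-}=b_1-a_1$, so $p^{+}+p^{-}\ge|b_1-a_1|$ with equality iff $\min(p^{+},p^{-})=0$, and similarly in the $y$- and $z$-directions --- and the total-variation formulation above is simply the way to make this precise uniformly for all paths.
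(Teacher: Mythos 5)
Your proof is correct, and its core idea --- decompose the length coordinatewise and observe that the travel in each coordinate is at least the net displacement, with equality exactly when that coordinate never backtracks --- is the same one the paper uses; only the packaging differs. The paper works directly with the stick decomposition: after an isometry it assumes $a_i<b_i$ for all $i$, notes that the total length equals $d_1(\mathbf a,\mathbf b)$ plus the sum of the negative stick lengths, and concludes that length $d_1(\mathbf a,\mathbf b)$ forces every negative stick to vanish; the converse is the same count run backwards. Your total-variation formulation buys two things the paper's version glosses over: it applies to paths not a priori given as finite unions of sticks (the identity $\operatorname{length}(r)=\operatorname{Var}(x)+\operatorname{Var}(y)+\operatorname{Var}(z)$ is exactly the right way to say ``length is the sum of the coordinate travels'' for an arbitrary rectifiable path in the $1$-complex), and it isolates the equality case in the clean standard fact that $\operatorname{Var}(f)=|f(1)-f(0)|$ if and only if $f$ is monotone, rather than in the slightly informal step ``no negative sticks exist, therefore all coordinate functions are nondecreasing.'' The cost is that you must justify the length identity, which you do correctly (locally only one coordinate changes, so the Euclidean and $\ell_1$ lengths agree). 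A minor point in your favor: your argument needs no normalization and so covers the degenerate case $a_i=b_i$, which the paper's reduction to $a_i<b_i$ silently excludes.
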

 
 \begin{proof} 
   Through an isometry, we may assume that $a_i\leq b_i$ for all $i\in \{1,2,3\}$. Suppose ${\bf r}$ is a path in $\lat$ from ${\bf a} =(a_1,a_2,a_3)\in \Z^3$ to ${\bf b}=(b_1,b_2,b_3)\in \Z^3$.
  
  The sum of the $x^+$-stick lengths must be at least $b_1-a_1$. Likewise, the sum of the $y^+$- and $z^+$-stick lengths must be at least $b_2-a_2$ and $b_3-a_3$, respectively. The length of the path is the sum of the stick lengths. Let $s$ be the sum of the negative stick lengths. Then the length of the path is greater than or equal to $(b_1-a_1)+(b_2-a_2)+(b_3-a_3)+s=d_1({\bf a,b})+s$. 
  
  If we assume that the length of the path is $d_1({\bf a,b})$, then $s=0$ implying that no negative sticks exist. Therefore, all coordinate functions of $r$ are nondecreasing, i.e. $r$ is a staircase walk.
  
If we assume that the path is a staircase walk, then  all coordinate functions of ${\bf r}$ are nondecreasing, since $a_i\leq b_i$ for all $i\in \{1,2,3\}$. This path will be comprised of only $x^+$-, $y^+$-, and $z^+$-sticks. Thus, the sum of the $x$-stick lengths is $b_1-a_1$. Likewise, the sum of the $y$-, and $z$-stick lengths is $b_2-a_2$ and $b_3-a_3$, respectively. Thus, the length of the path is $d_1({\bf a,b})=(b_1-a_1)+(b_2-a_2)+(b_3-a_3)$.

 \label{lem:staircasedistance}
 
 \end{proof}

\begin{definition}The \defn{minimal bounding box} of $K$ is the box, $[x_1,x_2]\times[y_1,y_2]\times[z_1,z_2]$, of smallest volume that contains $K$. The points $\{(x_i,y_j,z_k): i,j,k\in\{1,2\}\}$ are \defn{corners} of the minimal bounding box.\end{definition} 

The minimal bounding box of a lattice knot will have integer endpoints for all intervals in its product. 

\begin{lemma} A (critical) vertex ${\bf v}=(v_1,v_2,v_3)\in V(K)$ is a corner of the minimal bounding box of a lattice knot $K$ if and only if  $v_1$, $v_2$, and $v_3$ are each extrema of the set of $x$-, $y$-, and $z$-values, respectively, of $K$'s vertices. 

\label{lem:bound}
\end{lemma}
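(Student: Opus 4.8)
The plan is to first identify the minimal bounding box explicitly, and then observe that the lemma is essentially a restatement of that identification together with the definition of a corner. Write the minimal bounding box as $[x_1,x_2]\times[y_1,y_2]\times[z_1,z_2]$. The first and main step is to show that $x_1=\min\{w_1:{\bf w}\in V(K)\}$ and $x_2=\max\{w_1:{\bf w}\in V(K)\}$, and likewise for the $y$- and $z$-coordinates; these extrema exist and are integers because $V(K)$ is finite, nonempty, and has integer coordinates.

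To prove this, I would first check that the box $B:=[\,\min w_1,\max w_1\,]\times[\,\min w_2,\max w_2\,]\times[\,\min w_3,\max w_3\,]$ (all minima and maxima taken over ${\bf w}\in V(K)$) contains $K$. Any point $p\in K$ lies on some stick $S$ with endpoints $v_0,v_f\in V(K)$; if $S$ is an $x$-stick then $p$ shares its $y$- and $z$-coordinates with $v_0$ and $v_f$, while the $x$-coordinate of $p$ is a convex combination of theirs, and symmetrically for $y$- and $z$-sticks. In every case each coordinate of $p$ lies between the corresponding coordinates of two vertices, hence in the corresponding interval of $B$, so $K\subseteq B$. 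Conversely, any box $[a_1,a_2]\times[b_1,b_2]\times[c_1,c_2]$ containing $K$ contains every vertex, so $a_1\le\min w_1$, $a_2\ge\max w_1$, and similarly in the other two coordinates; thus it contains $B$ and has volume at least that of $B$, with equality forcing it to equal $B$. Hence $B$ is the minimal bounding box (its endpoints being integers, as already noted), i.e.\ $x_1=\min w_1$, $x_2=\max w_1$, and so on.

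Given this, the two implications are immediate and I would argue them directly. If ${\bf v}=(v_1,v_2,v_3)\in V(K)$ is a corner, then by definition ${\bf v}=(x_i,y_j,z_k)$ for some $i,j,k\in\{1,2\}$, so $v_1\in\{x_1,x_2\}$ is the minimum or maximum $x$-coordinate among vertices of $K$, i.e.\ a lower or upper bound on that set, and the same holds for $v_2$ and $v_3$. Conversely, if $v_1$ is an upper or a lower bound on the set of $x$-coordinates of $V(K)$, then since ${\bf v}\in V(K)$ that bound is attained by $v_1$ itself, forcing $v_1=x_2$ or $v_1=x_1$; applying this in all three coordinates gives $v_1\in\{x_1,x_2\}$, $v_2\in\{y_1,y_2\}$, $v_3\in\{z_1,z_2\}$, so ${\bf v}$ is one of the eight corner points and hence a corner.

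The only genuine content is the identification of the minimal bounding box in the second paragraph, and the crux there is the elementary observation that a stick varies in exactly one coordinate direction and is constant in the other two, which is precisely what makes the coordinatewise extrema over the finite set $V(K)$ agree with those over all of $K$. I do not expect a serious obstacle: once ``box of smallest volume'' is read as the coordinatewise-smallest box containing $K$ (equivalently, the intersection of all boxes containing $K$), the remaining steps are bookkeeping against the definitions.
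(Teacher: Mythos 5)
Your proof is correct and follows essentially the same route as the paper's: both reduce the statement to the identification of the minimal bounding box with the coordinatewise extrema of the vertex coordinates and then read off the two implications from the definition of a corner. You are somewhat more careful than the paper in actually justifying that identification (including the observation that a stick varies in only one coordinate, so the extrema over $K$ and over $V(K)$ agree, and the caveat about how ``smallest volume'' must be interpreted when the box is degenerate), whereas the paper takes it as immediate from the definition.
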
 

\begin{proof}
Let $[x_1,x_2]\times[y_1,y_2]\times[z_1,z_2]$ be the minimal bounding box of a lattice knot $K$. Every vertex in $K$ has an $x$-coordinate between $x_1$ and $x_2$, a $y$-coordinate between $y_1$ and $y_2$, and a $z$-coordinate between $z_1$ and $z_2$. 

Let ${\bf v}$ be a corner of the minimal bounding box. Then ${\bf v}$ is of the form ${\bf v}=(v_1,v_2,v_3)=(x_i,y_j,z_k)$, for $i,j,k\in\{1,2\}$. If $i=1,j=1$, or $k=1$, then the corresponding ${\bf v}$ coordinate is the minimum of all said coordinate values of vertices. If $i=2,j=2$, or $k=2$, then the corresponding ${\bf v}$ coordinate is the maximum of all said coordinate values of vertices. 

Now, let ${\bf v}=(v_1,v_2,v_3)\in V(K)$. If $v_1$ is less than or equal to all the $x$-values of vertices, then $v_1=x_1$ from the definition of the minimal bounding box. If $v_1$ is greater than or equal to all the $x$-values of vertices, then $v_1=x_2$. Apply the same argument to $v_2$ and $v_3$. Thus, $v$ is of the form ${\bf v}=(x_i,y_j,z_k)$, for $i,j,k\in\{1,2\}$ and is necessarily a corner of the minimal bounding box.

\end{proof}

\begin{lemma}
Let $K$ be a lattice knot.  Then $e_L(K)$ is even.  
\label{lem:even}
\end{lemma}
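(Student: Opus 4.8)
The plan is to exploit the bipartite structure of the cubic lattice under the parity of the coordinate sum. Define the two-coloring $c\colon \Z^3\to\Z/2\Z$ by $c(a,b,c)=a+b+c \pmod 2$. The key elementary observation is that every unit-length edge of $\lat$ joins two lattice points differing in exactly one coordinate by $\pm 1$, so its two endpoints have opposite colors.

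Next I would set up the combinatorial description of $K$ as a cycle. Subdividing each stick of $K$ into its constituent unit edges realizes $K$ as a simple closed polygonal curve whose edge set is a collection of $e_L(K)$ unit edges and whose vertex set is exactly $V(K)$, with $|V(K)|=e_L(K)$ by the bijection noted after the definition of edge length. Fixing an orientation and a starting vertex, traversing $K$ once produces a cyclic sequence of vertices $v_0,v_1,\dots,v_{e_L(K)}=v_0$ in which $v_i$ and $v_{i+1}$ are joined by a unit edge for every $i$.

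Now I would finish by a parity count: since consecutive vertices are joined by a unit edge, $c(v_{i+1})=c(v_i)+1 \pmod 2$, hence $c(v_i)=c(v_0)+i\pmod 2$ for all $i$. Taking $i=e_L(K)$ and using $v_{e_L(K)}=v_0$ gives $c(v_0)=c(v_0)+e_L(K)\pmod 2$, so $e_L(K)\equiv 0\pmod 2$, i.e. $e_L(K)$ is even.

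There is no real obstacle here; the only point requiring a sentence of care is the passage from the stick-and-length description of a lattice knot to its realization as a cycle in the unit-edge graph of $\lat$ (so that "traversing $K$" is literally a walk along unit edges returning to its start), and the bookkeeping that this walk visits exactly $e_L(K)$ edges. Everything else is the standard fact that a closed walk in a bipartite graph has even length.
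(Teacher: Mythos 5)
Your proof is correct, but it takes a genuinely different route from the paper's. The paper argues coordinate-wise: since $K$ is a closed curve, the total length of the $x^+$-sticks equals that of the $x^-$-sticks, so the combined $x$-stick length is even, and likewise for $y$ and $z$; hence $e_L(K)$ is a sum of three even integers. Your argument instead subdivides $K$ into unit edges and invokes the bipartite structure of $\Z^3$ under the parity of the coordinate sum, so that a closed walk must have even length. Both are complete and elementary. The paper's version yields slightly finer information --- each of the three axis-direction totals is individually even, a fact it reuses in Section 4 when checking that the $T_{p,p+1}$ stick tables close up --- while yours is the cleaner ``closed walk in a bipartite graph'' statement and generalizes immediately to any closed lattice curve without reference to the stick-type bookkeeping. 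The one point you flag, that traversing $K$ really is a closed walk along exactly $e_L(K)$ unit edges visiting the $e_L(K)$ points of $V(K)$, is exactly the edge--vertex bijection the paper records just after defining edge length, so there is no gap.
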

\begin{proof}
Since a knot is a closed curve, the sum of the $x^+$-stick lengths must equal the sum of the $x^-$-stick lengths, likewise for the $y$- and $z$-sticks. This implies that the sum of $x^+$- and $x^-$-stick lengths is even, similarly for the $y$- and $z$-sticks. The edge length of a knot is the sum of all stick lengths. Therefore, the edge length is the sum of three even positive integers. 
\end{proof}

\section{Conformations with $\delta_V=1$}
\label{section:Unknot}

\begin{theorem}
Let $K$ be a lattice conformation with $e_L(K)=\ell$.  Then $\delta_V(K)\leq \frac{\ell}{2}$.  
\end{theorem}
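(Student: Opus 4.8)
The plan is simply to bound the numerator and the denominator of each defining ratio separately. Since $\delta_V(K)$ is a maximum taken over pairs of \emph{distinct} vertices of $K$, it suffices to show that $d_K(p,q)/d_1(p,q) \le \ell/2$ whenever $p \neq q$ lie in $V(K)$.

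For the numerator, I would observe that a pair of distinct vertices $p,q$ cuts $K$ into two injective arcs, each of which is a concatenation of whole unit edges of $K$, and whose lengths sum to the total edge length $e_L(K) = \ell$. Since $d_K(p,q)$ is by definition the length of the shorter of these two arcs, it follows that $d_K(p,q) \le \ell/2$. (One could sharpen this to $d_K(p,q) \le \lfloor \ell/2 \rfloor = \ell/2$, where the last equality is Lemma~\ref{lem:even}, but this refinement is not needed.) For the denominator, distinct vertices are distinct points of $\Z^3$, so $d_1(p,q) = \sum_{i=1}^3 |p_i - q_i|$ is a positive integer, hence $d_1(p,q) \ge 1$.

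Combining the two bounds gives $d_K(p,q)/d_1(p,q) \le (\ell/2)/1 = \ell/2$ for every pair of distinct vertices, and taking the maximum over all such pairs yields $\delta_V(K) \le \ell/2$. There is no genuine obstacle here; the only step that deserves a sentence of justification is that the two arcs determined by a pair of vertices have lengths that are honest nonnegative integers summing to $e_L(K)$, which holds because each vertex lies on the integer lattice and therefore at an endpoint of a unit edge of the conformation.
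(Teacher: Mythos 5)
Your proposal is correct and is essentially the paper's argument: the paper's proof is the single chain of inequalities $\delta_V(K)\leq \max_{p,q\in V(K)} d_{K}(p,q)\leq \ell/2$, which implicitly uses exactly your two observations that $d_1(p,q)\geq 1$ for distinct lattice vertices and that the shorter of the two complementary arcs has length at most half the total edge length. Your write-up just makes these steps explicit.
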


\begin{proof}
  \[ \delta_V(K)\leq \max\limits_{{\bf p,q}\in V(K)} d_{K}({\bf p,q} )\leq \frac{\ell}{2}.\]
\end{proof}

\begin{theorem}
For any knot $K$, $\delta_V([K])\geq 1$. For the unknot $U$, $\delta_V([U])=1$.  
\label{thm:distunknot}
\end{theorem}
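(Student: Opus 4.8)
The plan is to prove the two assertions separately. For the bound $\delta_V([K])\ge 1$, the key point is that for any lattice conformation $K$ and any pair of distinct vertices $p,q\in V(K)$, each of the two injective arcs of $K$ joining $p$ to $q$ is, in particular, a lattice path from $p$ to $q$. By Lemma~\ref{lem:staircasedistance} no lattice path from $p$ to $q$ is shorter than $d_1(p,q)$, so $d_K(p,q)\ge d_1(p,q)$; hence $\delta_V(K)=\max_{p,q\in V(K)} d_K(p,q)/d_1(p,q)\ge 1$. Taking the infimum over all conformations representing $[K]$ gives $\delta_V([K])\ge 1$, and in particular $\delta_V([U])\ge 1$.

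For the reverse inequality $\delta_V([U])\le 1$ it suffices to exhibit a single unknot conformation $U$ with $\delta_V(U)=1$. I would take $U$ to be the unit square with vertex set $\{(0,0,0),(1,0,0),(1,1,0),(0,1,0)\}$, traversed cyclically with stick-type sequence $x^+,y^+,x^-,y^-$ and all stick lengths equal to $1$; this is a four-stick conformation of the unknot, with $e_L(U)=4$. Combined with the first part, the equality $\delta_V([U])=1$ follows once we check $\delta_V(U)=1$.

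The content of that verification is the following observation: for any two distinct vertices $p,q$ of $U$, at least one of the two arcs of $U$ from $p$ to $q$ is a staircase walk. Indeed, since the cyclic stick-type sequence is $x^+,y^+,x^-,y^-$, any arc consisting of at most two consecutive sticks contains at most one of $\{x^+,x^-\}$ and at most one of $\{y^+,y^-\}$, and at least one of the two complementary arcs between $p$ and $q$ has at most two sticks; that arc is therefore monotone in each coordinate, i.e.\ a staircase walk. By Lemma~\ref{lem:staircasedistance} its length is exactly $d_1(p,q)$, so $d_K(p,q)\le d_1(p,q)$. With the first part this forces $d_K(p,q)=d_1(p,q)$ for every pair, hence $\delta_V(U)=1$.

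I expect the only real subtlety to be the (easy, but worth flagging) point that one cannot run this argument with an arbitrary unknot conformation: for instance a long thin rectangle $[0,n]\times[0,1]\times\{0\}$ has pairs of vertices — a vertex on the bottom edge together with the vertex directly above it on the top edge — both of whose connecting arcs are far longer than the $\ell_1$-distance between them, so that conformation has $\delta_V>1$. The point of choosing $U$ to be the unit square is precisely that it is small enough that every pair of vertices is joined by a monotone arc, which is what makes $\delta_V(U)=1$; picking such a conformation is the main thing the argument needs.
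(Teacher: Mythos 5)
Your proposal is correct and follows essentially the same route as the paper: the lower bound comes from $d_K(p,q)\ge d_1(p,q)$ for all vertex pairs, and the upper bound for the unknot comes from exhibiting the unit square conformation $\partial[0,1]^2\times\{0\}$ and checking that every pair of its vertices is joined by an arc of length $d_1(p,q)$. Your extra verification via Lemma~\ref{lem:staircasedistance} (that one of the two arcs is always a staircase walk) just makes explicit what the paper asserts directly.
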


\begin{proof}
For any lattice knot $K$ and any pair of points ${\bf p}$ and ${\bf q}$ in $V(K)$, $d_{K}({\bf p,q})\geq d_1({\bf p,q})$. Thus, $\delta_V(K)\geq 1$ implying that $\delta_V([K])\geq 1$. If $[K]=[U]$, consider the conformation $U=\partial [0,1]^2\times \{0\}$.  For any pair of points ${\bf p}$ and ${\bf q}$ in $V(U)$, $d_{K}({\bf p,q})=d_1({\bf p,q})$.  Therefore, $\delta_V(U)=1$ and $\delta_V([U])=1$.

\end{proof}

\begin{definition}
Two vertices ${\bf v}$ and ${\bf v'} \in V(K)$ are \emph{antipodal} if the two injective paths from ${\bf v}$ to ${\bf v'}$, along $K$, are of equal length.\end{definition}

We now prove Theorem \ref{thm:Unknot}. 

\begin{proof}[Proof of Theorem 1.1]

Let $K$ be a lattice knot with $\delta_V(K)=1$, and let ${\bf v}=(v_1,v_2,v_3) \in V(K)$. By Lemma \ref{lem:even}, $e_L(K)=2\ell$ for some positive integer $\ell$. Therefore, there exists a point ${\bf v'}\in V(K)$ antipodal to ${\bf v}$.  Let ${\bf r_1}$ and ${\bf r_2}$ be the two injective paths from ${\bf v}$ to ${\bf v'}$ along $K$. Note, \[1\leq \frac{d_K({ \bf v}, {\bf v}')}{d_1({ \bf v}, {\bf v'})}\leq \max\limits_{{\bf p,q}\in V(K)} \frac{d_K({\bf p,q})}{d_1({\bf p,q})} = \delta_V(K)=1.\]

This implies that the shortest injective path from ${\bf v}$ to ${\bf v'}$ along $K$ has a length equal to $d_1({\bf v},{\bf v'})$. Since the points ${\bf v}$ and ${\bf v'}$ are antipodal, both injectives paths have a length of $d_1(\bf{v,v'})$. Therefore, ${\bf r_1}$ and ${\bf r_2}$ are both staircase walks, by Lemma \ref{lem:staircasedistance}. 
 
Thus, if a coordinate function of ${\bf r_1}$ is nondecreasing, then the same coordinate function of ${\bf r_2}$ is nondecreasing, likewise if the coordinate function were nonincreasing.  Assume that the $x$-coordinate function of ${\bf r_1}$ is nondecreasing. Then the $x$-value of ${\bf r_1}(1)$ is greater than or equal to the $x$-value $v_1$. Since ${\bf r_1}(0)={\bf r_2}(0)$ and ${\bf r_1}(1)={\bf r_2}(1)$, the $x$-value of ${\bf r_2}(1)$ is greater than or equal  to the $x$-value  $v_1$. Thus, ${\bf r_2}$ has a nondecreasing $x$-coordinate function. An analogous argument applies for each component function and whether said function is nondecreasing or nonincreasing.

Assume that the $x$-coordinate functions of ${\bf r_1}$ and ${\bf r_2}$ are nondecreasing. Then all points in the image of ${\bf r_1}$ and ${\bf r_2}$ have $x$-coordinates greater than or equal to the $x$-value $v_1$. Since $K$ is the union of the image of ${\bf r_1}$ and ${\bf r_2}$, all vertices of $K$ have an $x$-coordinate greater than or equal to $v_1$. If the $x$-coordinate functions were nonincreasing, then all vertices of $K$ would have an $x$-coordinate less than or equal to $v_1$. A similar argument applies to the $y$- and $z$-coordinate functions of the paths. Thus, the $x$-, $y$-, and $z$-coordinates of all vertices are bounded by $v_1$, $v_2$, and $v_3$, respectively; therefore, we have that ${\bf v}$ is a corner of the minimal bounding box, Lemma \ref{lem:bound}.

Since ${\bf v}$ was an arbitrary vertex, each vertex is a corner of the minimal bounding box of $K$, and $K$ is contained in the boundary of the minimal boundary box. The boundary of the minimal boundary box is ambiently isotopic to $S^2$ and the only knot embeddable in such a surface is the unknot. 

\end{proof}

\begin{corollary}
The only lattice knot conformations of vertex distortion equalling one, up to isometry, are shown in Figure \ref{fig:distortion1}.
\end{corollary}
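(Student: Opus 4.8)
The plan is to build on the proof of Theorem~\ref{thm:Unknot} just given. That argument shows that if $\delta_V(K)=1$, then every vertex of $K$ is a corner of the minimal bounding box $[x_1,x_2]\times[y_1,y_2]\times[z_1,z_2]$, and hence $K$ lies in the $2$-sphere $\partial\big([x_1,x_2]\times[y_1,y_2]\times[z_1,z_2]\big)$. So the corollary is really a finite bookkeeping problem: classify which lattice unknots can sit inside the boundary of a rectangular box with \emph{all} vertices at the eight corners. First I would observe that $K$, being a connected closed curve with vertices only at corners, must use at least one edge of the box between two adjacent corners; in fact each stick of $K$ runs along an edge of the box (a stick joins two vertices, both corners, and corners differing in exactly one coordinate are joined by a box edge, while corners differing in two or three coordinates cannot be joined by a single lattice stick). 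Thus $K$ is a cycle in the $1$-skeleton of the box, i.e. a Hamiltonian-type cycle in the graph which is the $1$-skeleton of a (possibly degenerate) combinatorial cube.

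Next I would enumerate the possibilities according to how many of the three intervals $[x_i,x_{i+1}]$ are degenerate (i.e. have $x_1=x_2$, etc.), since the box may be a genuine $3$-dimensional box, a rectangle, or a segment — but a segment cannot contain a knot, and the knot must actually visit corners in all the nondegenerate directions or it would not be closed. The key combinatorial step: a simple closed curve in the $1$-skeleton of the $3$-cube that passes through a prescribed subset $S$ of the $8$ vertices, each exactly once, with consecutive vertices joined by cube edges. Up to the symmetry group of the cube (order $48$) and up to reversing orientation, there are only finitely many such cycles: the $4$-cycles (the six faces), certain $6$-cycles, and the $8$-cycle (a Hamiltonian cycle of the cube, unique up to symmetry). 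For each I would check it is the unknot — which it must be, since it lies on $S^2$ — and then, crucially, verify that its vertex distortion really equals $1$ and not something larger, or conversely discard cycles whose vertex distortion exceeds $1$. This last check is where one pins down \emph{exactly} which conformations appear in Figure~\ref{fig:distortion1}: for a cycle $C$ in the box boundary, $\delta_V(C)=1$ iff for every pair of corners $p,q$ on $C$, the shorter arc of $C$ between them has length $d_1(p,q)$, equivalently both arcs are staircase walks.

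Carrying this out, I would argue that the $8$-cycle Hamiltonian cube curve fails (there are corner pairs whose shorter connecting arc is not monotone — e.g. two corners at $\ell_1$-distance $2$ joined only through an arc of length $3$), so it does not appear; similarly I would rule out or keep the $6$-cycles by direct inspection; and the $4$-cycle bounding a face of the box always works, since any two of its four corners are joined by the two ``sides'' of the rectangle, both of which are obviously staircase walks. Hence, up to the isometries of $\mathbb{L}^3$, the only conformations with $\delta_V=1$ are the boundaries of axis-aligned rectangles $\partial\big([a,b]\times[c,d]\times\{e\}\big)$ (and isometric images), which is exactly the content of Figure~\ref{fig:distortion1}.

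I expect the main obstacle to be the case analysis over cycles in the cube's $1$-skeleton together with the degenerate-box cases: one must be careful that ``every vertex is a corner'' allows repeated use of the same geometric corner only if... (it does not — a simple closed curve visits each point once), and one must handle rectangles and the possibility $x_1=x_2$ cleanly so that no spurious non-planar examples slip through. Organizing the argument as: (1) reduce to cycles in the box $1$-skeleton (from Theorem~\ref{thm:Unknot}); (2) classify such cycles up to cube symmetry; (3) compute $\delta_V$ for each and keep only those equal to $1$; (4) translate back to a statement about $\mathbb{L}^3$ up to isometry, should make the write-up short, with step (3) being the real content.
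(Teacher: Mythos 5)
Your reduction of the problem to cycles in the $1$-skeleton of the minimal bounding box is the right first step, but your final classification is wrong in both directions, and both errors trace to the same oversight: $V(K)$ is defined as \emph{all} of $K\cap(\Z\times\Z\times\Z)$, not merely the stick endpoints. Since the proof of Theorem~\ref{thm:Unknot} forces every point of $V(K)$ to be a corner of the minimal bounding box, no stick can have length $\ge 2$ (its interior lattice points would be non-corner vertices), so every stick is a unit edge and the box is a unit square or a unit cube. Consequently your concluding family ``the boundaries of axis-aligned rectangles $\partial([a,b]\times[c,d]\times\{e\})$'' is too large: already $\partial([0,1]\times[0,2]\times\{0\})$ contains the non-corner vertices $(0,1,0)$ and $(1,1,0)$, which satisfy $d_1=1$ but $d_K=3$, so its vertex distortion is $3$. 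Of the rectangles, only the unit square survives.

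The error of omission is in the $6$-cycles, which you leave ``to direct inspection'' and then silently discard. They split into two symmetry classes. The twelve $6$-cycles obtained by deleting two \emph{adjacent} corners of the unit cube each contain a pair of cube-adjacent vertices at cycle-distance $3$ (the endpoints of the unused chord), hence have distortion $3$ and are excluded; the Hamiltonian $8$-cycles have an antipodal-on-the-cycle pair with $d_K=4$ and $d_1\le 3$ and are likewise excluded. But the four hexagonal cycles obtained by deleting an \emph{antipodal} pair, e.g.\ the cycle through $(0,0,0),(1,0,0),(1,1,0),(1,1,1),(0,1,1),(0,0,1)$, satisfy $d_K(p,q)=d_1(p,q)$ for every pair of their vertices (check cycle-distances $2$ and $3$ directly), so they have vertex distortion exactly $1$ and must appear in the classification. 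The correct answer, and the content of Figure~\ref{fig:distortion1}, is therefore the unit square together with this hexagon, up to isometry --- not a family of rectangles. Your outline (reduce to skeleton cycles, classify up to cube symmetry, compute $\delta_V$ case by case) is sound; it is the execution of steps (1) and (3) that needs repair as above.
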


\begin{figure}[]

\includegraphics[scale=1.2]{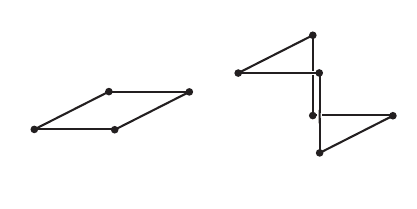}

\caption{Lattice conformations with vertex distortion equalling one.}
\label{fig:distortion1}
\end{figure}

\begin{proof} 
Since each vertex is a corner of the minimal bounding box, the minimal bounding box will be isometric to $[0,1]^3$ or $[0,1]^2$. If the minimal bounding box were any larger, then not all vertices of the knot could be corners of the box. Let us tabulate all knots embeddable in $[0,1]^3\cap \lat$. 

Starting at $(0,0,0)$, we can add an edge between $(0,0,0)$ and $(1,0,0)$ and an edge between $(0,0,0)$ and $(0,1,0)$. This is general for the tabulation since all 2 choose 3 options of  pairs of edges stemming from $(0,0,0)$ are isometric. 

There are three cases: 

Case 1: $ {\bf e_1}=(1,0,0)$ connected to $(1,1,0)$ and ${\bf e_2} =(0,1,0)$ connected to $(1,1,0)$.

Case 2: $ {\bf e_1}=(1,0,0)$ connected to $(1,0,1)$ and ${\bf e_2} =(0,1,0)$ connected to $(0,1,1)$. There are two ways to join $(1,0,1)$ and $(0,1,1)$ along the top of the box. The first way joins them to the point $(1,1,1)$ giving the right diagram in Figure \ref{fig:distortion1}. The second joins them to $(0,0,1)$ giving distortion $>1$. 

Case 3: ${\bf e_1}=(1,0,0)$ connected to $(1,0,1)$ and ${\bf e_2}=(0,1,0)$ connected to $(1,1,0)$. Then $(1,1,0)$ must be connected to $(1,1,1)$. There are two ways to connect $(1,0,1)$ to $(1,1,1)$ along the top of the box. The first is the short way just connecting the two points. The other way is the long way around. In either case, the distortion $>1.$

%We can now connect ${\bf e_1}=(1,0,0)$ to $(1,0,1)$ or $(1,1,0)$ and connect ${\bf e_2}=(0,1,0)$ to $(0,1,1)$ or $(1,1,0)$. 
%
%If ${\bf e_1}$ and ${\bf e_2}$ are both connected to their latter options, then we get the first diagram in Figure \ref{fig:distortion1}. If ${\bf e_1}$ and ${\bf e_2}$ are both connected to their former options, then we can get the knot in the second diagram of Figure \ref{fig:distortion1} or the knot $T$ following the edges connecting $(0,0,0)$ to $(1,0,0)$ to $(1,0,1)$ to $(0,0,1)$ to $(0,1,1)$ to $(0,1,0)$ to $(0,0,0)$; this traced knot $T$ does not have distortion equalling one.  
%
%If one of ${\bf e_i}$ is connected to its former option and the other is connected to its latter option, then the construction can be completed by a knot isometric to $T$, with distortion greater than one, or a knot isometric to one following the edges connecting $(0,0,0)$ to $(1,0,0)$ to $(1,0,1)$ to $(0,0,1)$ to $(0,1,1)$ to $(1,1,1)$ to $(1,1,0)$ to $(0,1,0)$ to $(0,0,0)$, with distortion greater than one.
\end{proof}

\section{Lattice Torus Knots}
\label{section:Lattice Torus Knots}

In \cite{ADAMS_2012} the authors illustrated a triplet of lattice knots showing minimal lattice stick number. However, no stick length nor vertification of knot type was given. In this section, we will tabulate a family of knots with similar geometry to that of \cite{ADAMS_2012}. In section \ref{section:torusverification} we prove that these knots are $(p,p+1)$-torus knots.

 For positive integers $p>2$, the tabulation is given in Figure \ref{fig:knotsequence}. Once verified as $(p,p+1)$-torus knots, a subsequence of these knots will be used to prove Theorem \ref{thm:Unbounded}. The sequence of stick types has $6p$ terms and is periodic with period 6. There are $2p$ $x$-sticks, $2p$ $y$-sticks, and $2p$ $z$-sticks. 
 
 We will denote by $\mathcal{K}_p$ the lattice curve defined by the tabulation in Figure \ref{fig:knotsequence}.

\begin{figure}[]

  \begin{tabular}{ c | *{3}{c} l }

     & $x$ & $y$ & $z$ \\   \cline{1-4}
    1 & 2 & $p-1$ & $2p-1$\\ 
    2 & 3 & $p$ & $2p-2$       \\
        3 & 3 & $p-1$ & $2p-3$ \\	  

    4 & 4 & $p$ & $2p-4$ \\
    5 & 4 & $p-1$ & $2p-5$ \\
    6 & 5 & $p$ & $2p-6$ \\
    7 & 5 & $p-1$ & $2p-7$  &  \hspace{1.5cm} $z^+$, $x^+$, $y^+$, $z^{-}$, $x^{-}$, $y^{-}$, \\
  
    \vdots & \vdots  & \vdots & \vdots  &  \hspace{1.5cm} $z^+$, $x^+$, $y^+$, $z^{-}$, $x^{-}$, $y^{-}$, \dots \\

    $2p-6$ & $p-1$ & $p$ & $6$  &  \hspace{1.5cm} $z^+$, $x^+$, $y^+$, $z^{-}$, $x^{-}$, $y^{-}$ \\

    $2p-5$ & $p-1$ & $p-1$ & $5$ \\
    $2p-4$ & $p$ & $p$ & $4$ \\
    $2p-3$ & $p$ & $p-1$ & $3$ \\
    $2p-2$ & $p+1$ & $p$ & $2$ \\
    $2p-1$ & $p$ & $2p-1$ & $1$ \\
   $2p$ & 1 & $p$ & $p$ \\

  \end{tabular}
     \vspace{1cm}

\caption{Stick length sequence with pairing stick type sequence of $\mathcal{K}_p$.}

\label{fig:knotsequence}

\end{figure}

\begin{lemma}
The tabulation given in Figure \ref{fig:knotsequence} forms a closed curve.
\label{lem:cc}
\end{lemma}

\begin{proof}
 The sum of $z^+$-stick lengths must equal the sum of the  $z^-$-stick lengths, similarly for the $x$- and $y$-sticks. The sequence of stick types dictates that the sum of the $z^+$-stick lengths, denoted $\sum |z^+|$, is the sum of the odd row entries in the $z$-column of Figure \ref{fig:knotsequence} while $\sum |z^-|$ is the sum of the even row entries. 

Then, 

 $$ \sum |z^+|=(2p-1)+(2p-3)+(2p-5)+\cdots+ 3+1=p^2,$$
and 

$$ \sum |z^-|=[(2p-2)+(2p-4)+(2p-6)+\cdots+4+2]+p=[p(p-1)]+p=p^2.$$

\noindent Likewise, 

$$\sum|y^+|=[p-1+p-1+\cdots+p-1]+2p-1=[(p-1)^2]+2p-1=p^2,$$

$$\sum|y^-|=[p+p+\cdots+p]+p=[p(p-1)]+p=p^2,$$

$$\sum|x^+|=[2+3+4+\cdots+p]+p=[(p+2)(p-1)/2]+p=p^2/2+3p/2-1,$$

\noindent and

$$\sum|x^-|=[3+4+5+\cdots+p+(p+1)]+1=[(p+4)(p-1)/2]+1=p^2/2+3p/2-1.$$\\

Therefore, these sticks form a closed curve.
\end{proof}

\begin{corollary}
The total length of the curve described by the tabulation in Figure \ref{fig:knotsequence} is \[5p^2+3p-2.\]
\label{cor:total}
\end{corollary}

%For a knot conformation $K$, stick in $K$ which is parallel to the $x$-axis is called an $x$-$stick$ of $K$, and $|K|_x$ denotes the number of $x$-sticks in $K$. Define $y$-sticks and $z$-sticks similarly. 

 Each $y$- and $z$-stick lies in a plane whose $x$-coordinate is some integer $a$; these sticks exist in the $x$-$level$ $a$ of the knot.
 
 \begin{definition} The \defn{x-level a}, for $a\in\Z$, of a lattice knot $K$ is the intersection of the plane $x=a$ and $K$. The \defn{y-level a} and \defn{z-level a} of $K$ is defined analogously for planes $y=a$ and $z=a$. \end{definition}

If each level of our closed lattice curve contains no points of self-intersection, then the curve is simple. The $n$th partial sum of the the $y$-stick length sequence will give the $y$-level containing the $n$th $y$-stick's terminal critical vertex.

 If the terms of the partial sum sequence of $y$-stick lengths are all distinct, then $\mathcal{K}_p$ contains just one arc in each $y$-level, similarly for the partial sum sequence of $x$- and $z$-stick lengths.

\begin{lemma} Self-intersections of $\mathcal{K}_p$  can only occur on levels that represent repeated values in the partial sum sequence of a given stick type's length sequence. \label{lem:int} \end{lemma}
\begin{proof}

First note that the stick type sequence  cycles $z$-, $x$-, $y$-sticks consecutively. 

The first $z$-stick terminates in $z$-level $2p-1$. The next two sticks, an $x$-stick and a $y$-stick, are contained in this $z$-level. On their own, these two sticks cannot form a point of self-intersection of the closed curve. The stick type sequence then returns to a $z$-stick that will necessarily change the $z$-level of the curve after its addition. 

If the curve never returns to $z$-level $2p-1$, meaning that no terminal critical vertex of another $z$-stick has a $z$-coordinate of $2p-1$, then no point of self-intersection is cause by an $x$- or $y$-stick intersecting either of the two sticks contained in $z$-level $2p-1$. This doesn't preclude  a $z$-stick intersecting the $x$- or $y$-stick in $z$-level $2p-1$; if a $z$-stick did intersect the $x$-stick in $z$-level $2p-1$, then the $y$-level that this intersection is contained in, must represent a repeated $y$-level traversed by the closed curve. This is a consequence of the previous argument. 

Upon entering a $y$-level, the closed curve will traverse a $z$- and $x$-stick. Among themselves, these two sticks will not produce a self-intersection point of the closed curve. Therefore, if a $z$-stick is to intersect an $x$-stick in this level, then the knot must return to this level later in its construction.

Combining the two previous paragraphs, if an $x$-stick in $z$-level $2p-1$ contains a point of self-intersection of the closed curve, this implies some $z$- or $y$-level was traversed multiple times by the closed curve, i.e. there exist repetition in the partial sum sequence of $z$- or $y$-stick lengths.

The argument generalizes for each level and each stick type. If a self-intersection were to occur, then the $\mathcal{K}_p$ construction must return to a repeated level. Returning to a repeated level corresponds to a repeated value in the partial sum sequence of a given stick type's length sequence.

\end{proof}

\begin{lemma} The only level with multiple arcs in $\mathcal{K}_p$ is $x$-level 2. \label{lem:x2} \end{lemma}

\begin{proof} The partial sum sequence of the $y$-stick length sequence is 
\[p-1,-1,p-2,-2,p-3,-3,\dots,1,1-p,p,0.\]

\noindent Ordering the values of this sequence in nondecreasing order, we obtain  $1-p,2-p,\dots,-2,-1,0,1,2,\dots,p-2,p-1,p$ and observe that  there are no repeated values in the sequence.

The partial sum sequence of the $z$-stick length sequence is 
$$2p-1,1,2p-2,2,2p-3,3,\dots,p-3,p+2,p-2,p+1,p-1,p,0.$$

\noindent Ordering the values of this sequence in nondecreasing order, we obtain $0,1,2,3,\dots,2p-1$ and observe that there are no repeated values in the sequence.

The partial sum sequence of the $x$-stick length sequence is 
$$2,-1,2,-2,2,-3,2,-4,\dots,2 ,2-p,2,1-p,1,0.$$

\noindent Excluding the $(2p-1)$st term, all odd indexed values are 2, and 2 is the only repeated value of this sequence. All $x$-levels excluding $x$-level 2 contain just one arc. 

The value 2 is repeated $p-1$ times in the partial sum sequence of the $x$-stick length sequence. Therefore, $x$-level 2 will have $p-1$ $y^+z^-$-stick arcs. \end{proof}

We will show that $x$-level 2 does not contain any self-intersection points, illustrating that no level of $\mathcal{K}_p$ contains a self-intersection point. 

\begin{lemma}
In the lattice knot $\mathcal{K}_p$, the $p-1$ $y^+z^-$-stick arcs in $x$-level 2 do not intersect. Moreover, these stick arcs lie in $x$-level 2 as shown in Figure \ref{fig:x2plane}.
\label{lem:closed}
\end{lemma}

\begin{figure}[]

\begin{overpic}[unit=.5mm,scale=.5]{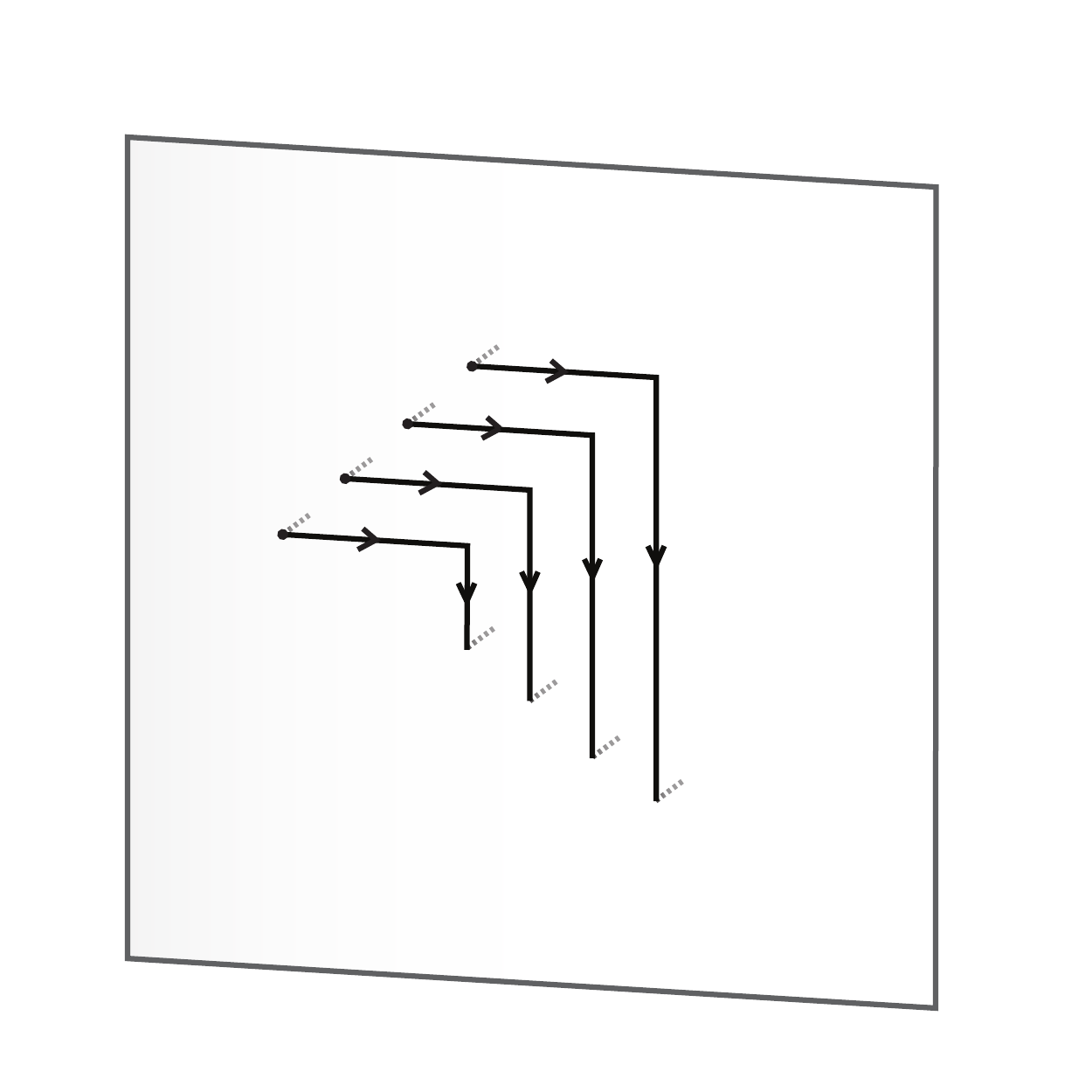} \put(85,144){\bf $\text{v}_1$}  \put(73,134){\bf $\text{v}_2$}  \put(61,123){\bf $\text{v}_3$}  \put(49,113){\bf $\text{v}_4$} 

\end{overpic}
\caption{The $x$-level 2 of $\mathcal{K}_5$.}
\label{fig:x2plane}
\end{figure}

%\begin{figure}[h]
%     \begin{overpic}[unit=.9mm,scale=.9]{../suspension.pdf} \put(136,46){$\bar a_1$} \put(72,46){$\bar a_2$}
%\put(50,46){$\bar a_3$} \put(80,79){$l_1$} \put(80,25){$l'_1$} \put(55,69){$l_2$} \put(55,34){$l'_2$} \put(44,63){$l_3$} \put(44,40){$l'_3$} \put(10,103){(0,1)} \put(10,0){(0,-1)}
%\end{overpic}
%\caption{The space $Z$.}
%\end{figure}

\begin{proof} The initial critical vertex of a $y^+z^-$-stick arc, in this plane, has a $y$ and $z$ value one less than the previous stick's initial critical vertex. Order the initial critical vertices of these arcs following the orientation of the closed curve; this gives the sequence ${\bf v}_1 ,{\bf v}_2, \dots, {\bf v}_{p-1}$ seen in Figure \ref{fig:x2plane}. 

Each column of stick lengths in Figure \ref{fig:knotsequence} can be used to define a sequence of vectors. Let ${\bf z}_i=(0,0,z_i)$ where $z_i$ is the value in the $i$th row of the $z$-column in the table of lengths; we define ${\bf x}_i=(x_i,0,0)$ and ${\bf y}_i=(0,y_i,0)$ likewise. Then, ${\bf v_1}={\bf z_1}+{\bf x_1}=(2,0,2p-1).$ We can then define a recursive sequence \[ {\bf v_n}={\bf v_{n-1}}+{\bf y_{2n-3}}-{\bf z_{2n-2}}-{\bf x_{2n-2}}-{\bf y_{2n-2}}+{\bf z_{2n-1}}+{\bf x_{2n-1}},\] for $2\leq n\leq p-1.$ We verify using the stick length sequences, \\
${\bf -x_{2n-2}+x_{2n-1}}=(0,0,0)$ for $2\leq n \leq p-1$, ${\bf y_{2n-3}-y_{2n-2}}=(0,-1,0)$ for $2\leq n \leq p-1$, and ${\bf -z_{2n-2}+z_{2n-1}}=(0,0,-1)$ for $2\leq n \leq p-1$. Therefore, we can simplify the former recursive definition to \[ {\bf v_{n}}={\bf v_{n-1}}+(0,-1,-1),\] for $2\leq n \leq p-1,$ and express the sequence in closed form as \[ {\bf v_{n}}=(2,1-n,2p-2-n),\] for $1\leq n \leq p-1.$ This verifies our claim that the initial critical vertex of a $y^+z^-$-stick arc, in $x$-level 2, has a $y$ and $z$ value one less than the previous stick's initial critical vertex. This implies that no two $y^+$-sticks on $x$-level 2 will intersect.

All $y^+$-sticks in $x$-level 2 have a length of $p-1$. 
%ccc Therefore, no terminal critical vertex of a $y^+$-stick in this plane lies above another $y^+$-stick, i.e. the $y$-value of any $y$-stick's terminal critical vertex is greater than all $y$-values of each sequential $y$-sticks in $x$-level 2. ccc 
Thus, no $z^-$-stick of a $y^+z^-$-arc will intersect a $y^+$-stick nor another $z^-$-stick on $x$-level 2, and, resultantly, $x$-level 2 contains no points of self-intersection. \end{proof}

\begin{corollary}
Figure \ref{fig:knotsequence} tabulates a lattice knot for $p>2$.
\end{corollary}

\begin{proof} Lemma \ref{lem:cc} gives that $\mathcal{K}_p$ is a closed curve. Together, lemma \ref{lem:int}, lemma \ref{lem:x2}, and lemma \ref{lem:closed} give that $\mathcal{K}_p$ is a simple curve.  \end{proof}

\noindent For $p=7$, the tabulation constructs the knot in Figure \ref{fig:t78}.

\begin{figure}

\includegraphics[scale=.28]{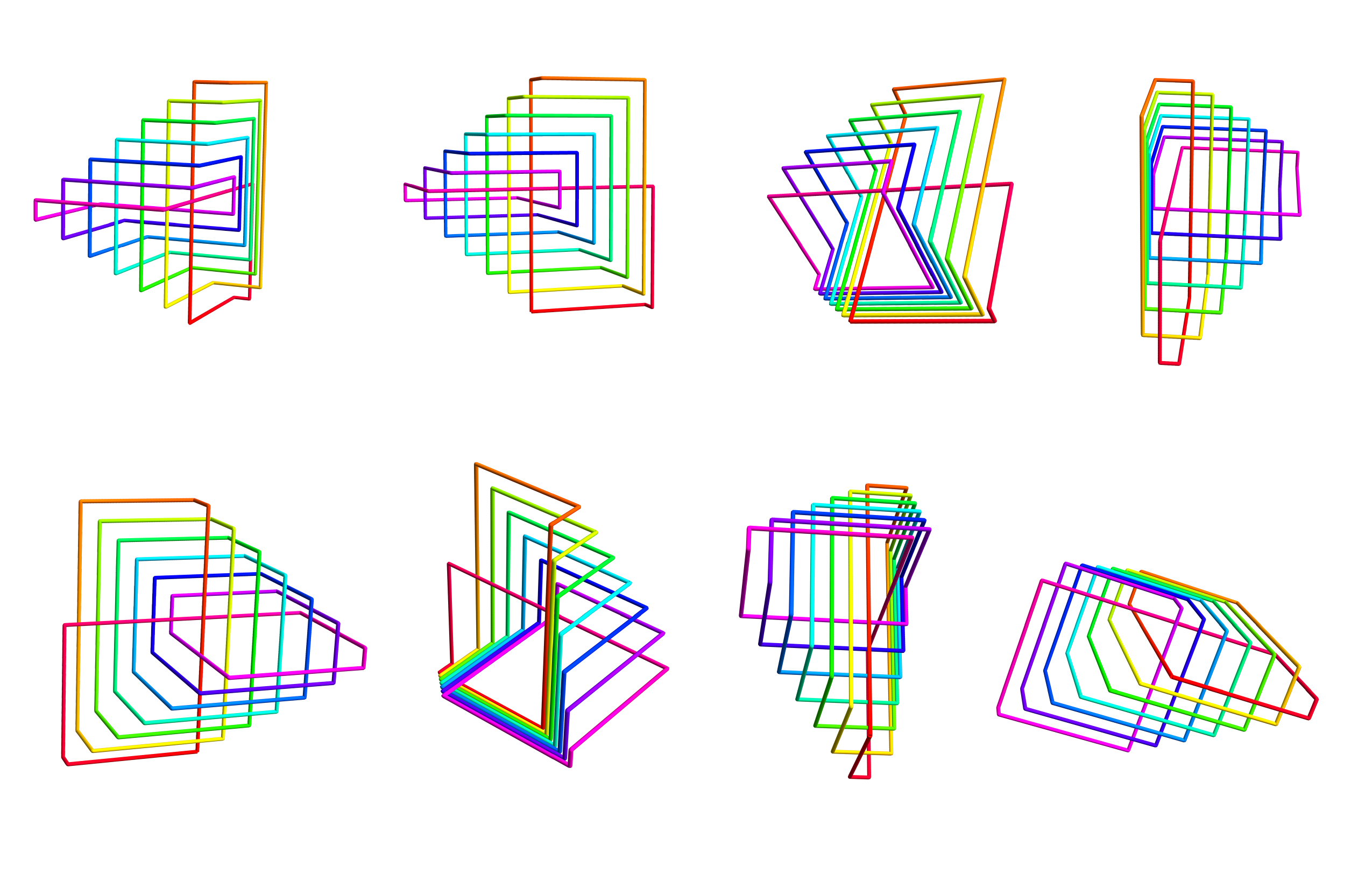}

\caption{Different views of our construction of $\mathcal{K}_p$.}
\label{fig:t78}
\end{figure}

%%%%%%%%%%%%%%%%%%%%%%%%%%%%%%%%%%%%%%%%%%%%%%%
%%%%%%%%%%%%%%%%%%%%%%%%%%%%%%%%%%%%%%%%%%%%%%%
%%%%%%%%%%%%%%%%%%%%%%%%%%%%%%%%%%%%%%%%%%%%%%%
%%%%%%%%%%%%%%%%%%%%%%%%%%%%%%%%%%%%%%%%%%%%%%%

\section{Torus Knot Verification}
\label{section:torusverification}
In order to verify that these knots are torus knots, we will generate a scalable toroidal polyhedron that $\mathcal{K}_p$ can be embedding into.

\begin{figure}

\includegraphics[scale=1]{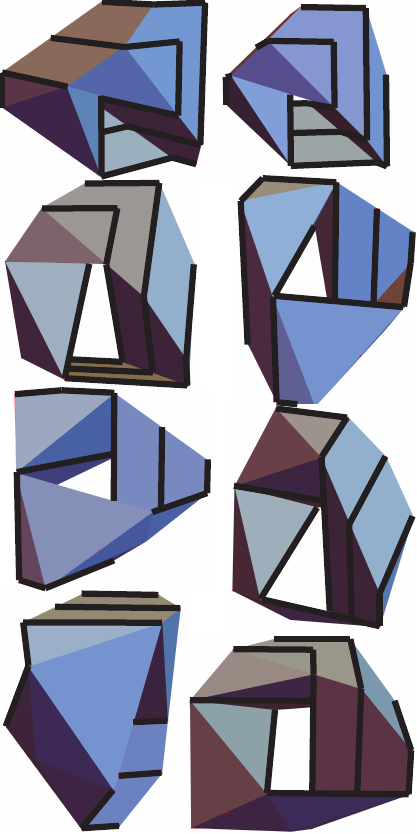}

\caption{Different views of a torus with embedded $\mathcal{K}_3$.}
\end{figure}

We will first show that, excluding the final $y^+$- and final $z^-$-stick, each stick type is coplanar. 

This was verified for the remaining $y^+$- and $z^-$-sticks in pursuit of proving our closed curve $\mathcal{K}_p$ has no points of self-intersection, Lemma \ref{lem:closed} and Figure \ref{fig:x2plane}. 
No terminal critical vertex of a $y^+$-stick in  $x$-level 2 lies above another $y^+$-stick, i.e. the $y$-value of any $y$-stick's terminal critical vertex is greater than all $y$-values of each sequential $y$-sticks in $x$-level 2. 

In fact, the terminal vertices of the $y^+$-sticks, which are also initial vertices of the $z^-$-sticks, are collinear. If the initial or terminal vertices of a collection of sticks of the same type is collinear, then the collection of sticks is coplanar.

We will use an analogous method to show the coplanarity of the remaining stick types; we will use the collinearity of critical vertices. The convex hull of each stick type will then be used as a face of the polyhedron.

As before, let ${\bf z}_i=(0,0,z_i)$ where $z_i$ is the value in the $i$th row of the $z$-column in the table of lengths and ${\bf x}_i=(x_i,0,0)$ and ${\bf y}_i=(0,y_i,0)$ are defined analogously.

\begin{lemma} Excluding the final $y^+$- and final $z^-$-stick, each stick type of $\mathcal{K}_p$,  is coplanar. \end{lemma}
\begin{proof}
$z^+$: We will show that all initial critical vertices of $z^+$-sticks are collinear. Let ${\bf w_n}$ represent the $n$th $z^+$-stick's initial critical vertex. Then, ${\bf w_1}=(0,0,0)$ and \[{\bf w_n}={\bf w_{n-1}}+{\bf z_{2n-3}}+{\bf x_{2n-3}}+{\bf y_{2n-3}}-{\bf z_{2n-2}}-{\bf x_{2n-2}}-{\bf y_{2n-2}},\] for $2\leq n \leq p$. We have  ${\bf z_{2n-3}}-{\bf z_{2n-2}}=(0,0,1)$, ${\bf y_{2n-3}}-{\bf y_{2n-2}}=(0,-1,0)$, and ${\bf x_{2n-3}}-{\bf x_{2n-2}}=(-1,0,0).$ Therefore, in closed form, \[{\bf w_n} = (1-n,1-n,n-1)=(1,1,-1)+(-1,-1,1)n ,\] for $1\leq n \leq p$.

%$x^+$: Likewise, ${\bf v_1}={\bf z_1}=(0,0,2p-1)$ and \[ {\bf v_n}= {\bf v_{n-1}}+{\bf x_{2n-3}}+{\bf y_{2n-3}}-{\bf z_{2n-2}}-{\bf x_{2n-2}}-{\bf y_{2n-2}}+{\bf z_{2n-1}},\] for $2\leq n \leq p$. .....

$y^-$: Each initial critical vertex of a $z^+$-stick is a terminal critical vertex of a $y^-$-stick. Therefore, ${\bf w_n}$ represents the terminal critical vertices of $y^-$-sticks. The collinearity of ${\bf w_n}$ implies that the $y^-$-sticks are coplanar. 

$x^+$: As seen in the earlier verification of $\mathcal{K}_p$ as a closed curve, Lemma \ref{lem:closed} associated with Figure \ref{fig:x2plane}, the initial critical vertices of the $y^+$-sticks, excluding the final $y^+$-stick, are collinear with  ${\bf v_n}=(2, 1, 2p-2)+ (0,-1,-1)n$ for $1\leq n\leq p-1$. Each initial critical vertex of a $y^+$-stick is a terminal critical vertex of a $x^+$ stick. Therefore, at least all but the final $x^+$-sticks are coplanar. 

The final $x^+$-stick has an initial critical vertex of $(1-p,1-p,p)$,  the penultimate $x^+$-stick has an initial critical vertex of $(2-p,2-p,1+p)$, and the $x^+$-stick previous to this has an initial critical vertex of $(3-p,3-p,2+p)$; these initial critical vertices are collinear. Therefore, the final three $x^+$-sticks are coplanar which implies that all the $x^+$-sticks are coplanar. 

$x^-$: Since all of the $y^-$-sticks' terminal critical vertices are collinear and all $y^-$-sticks are of equal length, the $y^-$-sticks have collinear initial critical vertices. Therefore, all $x^-$-sticks have collinear terminal critical vertices, and all $x^-$-sticks are coplanar. \end{proof}

\begin{theorem}
The lattice knot $\mathcal{K}_p$ from the tabulation given in Figure \ref{fig:knotsequence} is a $(p,p+1)$-torus knot.
\label{thm:torus}
\end{theorem}

\begin{proof}

Excluding the final $y^+$-stick and $z^-$-stick, the convex hull of a stick type intersects two other convex hull of stick types;  the convex hull of a stick type intersects the convex hull of the stick type prior and after in the stick type sequence. This produces a band with two half-twists, seen in Figure \ref{fig:band}, that $\mathcal{K}_p$ partially embeds into.

\begin{figure}[]

\includegraphics[scale=.4]{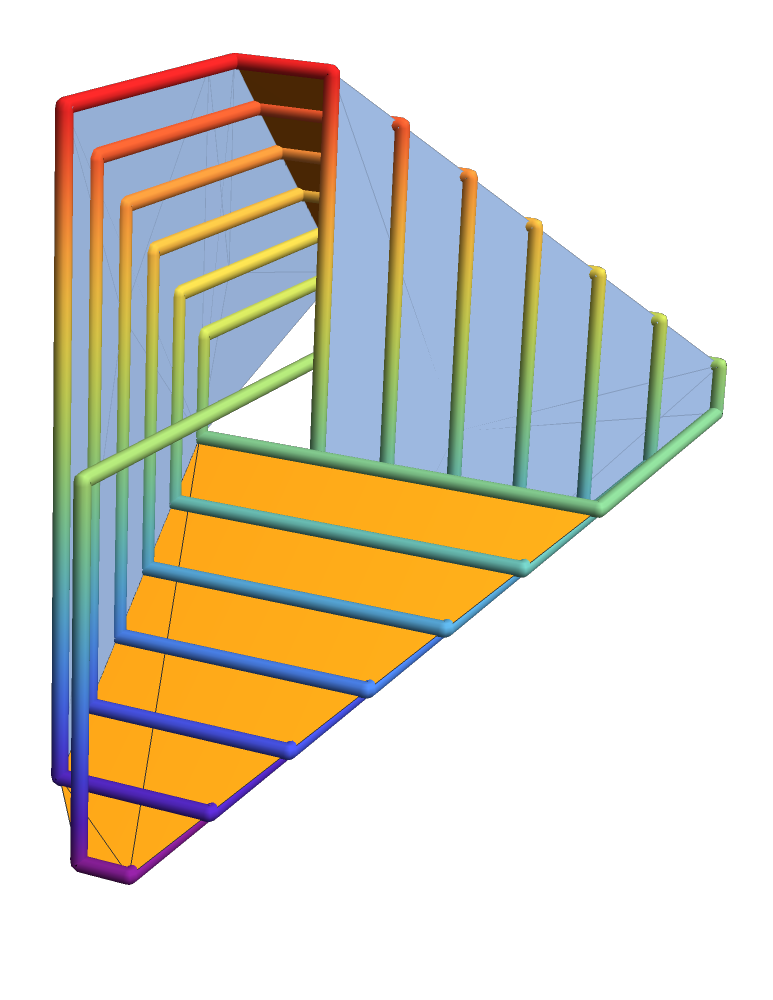}
\includegraphics[scale=.4]{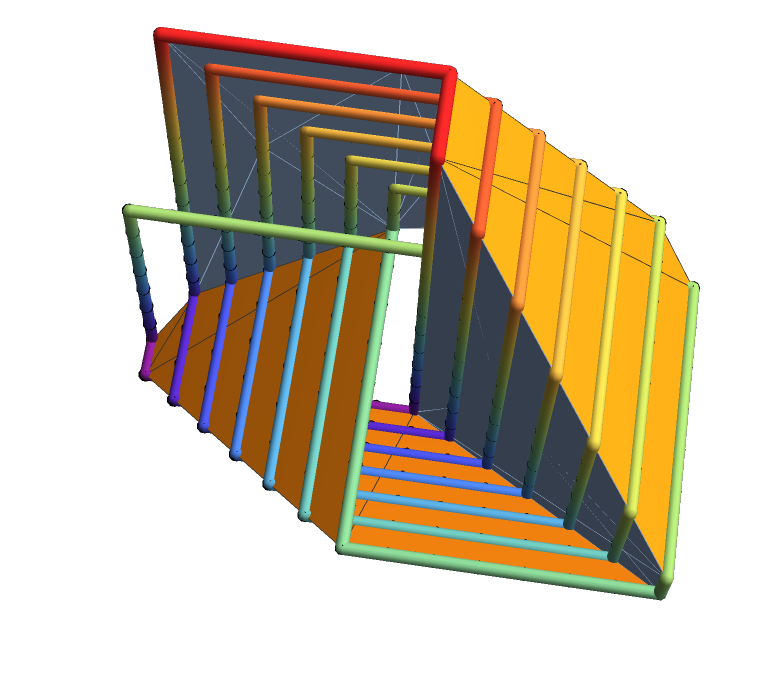}

\includegraphics[scale=.4]{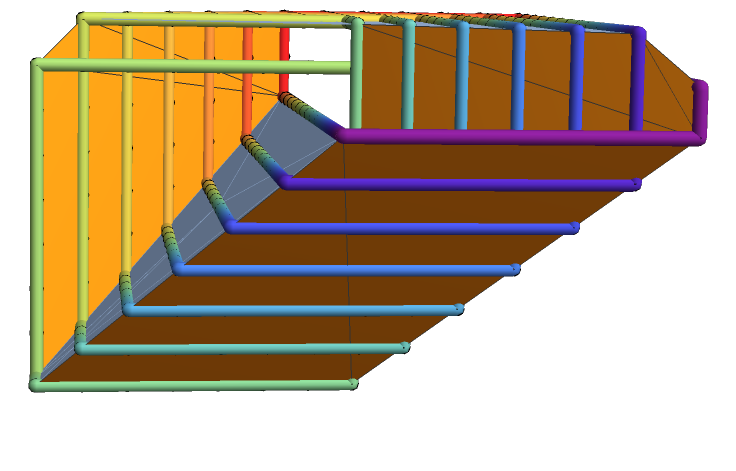}
\includegraphics[scale=.4]{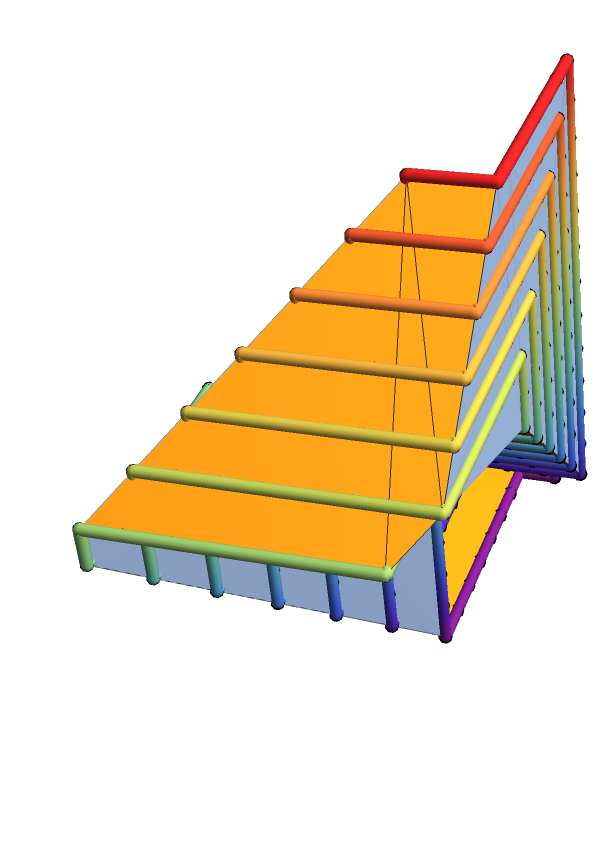}

\caption{ $\mathcal{K}_7$ superimposed with planar faces. }
\label{fig:band}
\end{figure}

Importantly, the geometry of this band remains fixed for all $\mathcal{K}_p$. Meaning, gluing of addition polygonal faces to Figure \ref{fig:band} to create the torodial polygon of Figure \ref{fig:embed}, is a general operation. The geometry of Figure \ref{fig:embed} welcomes an embedding of the general $\mathcal{K}_p$ into its boundary.  An explicit parameterization of the remaining faces is cumbersome and has been omitted for brevity.

\begin{figure}[]
\includegraphics[scale=.34]{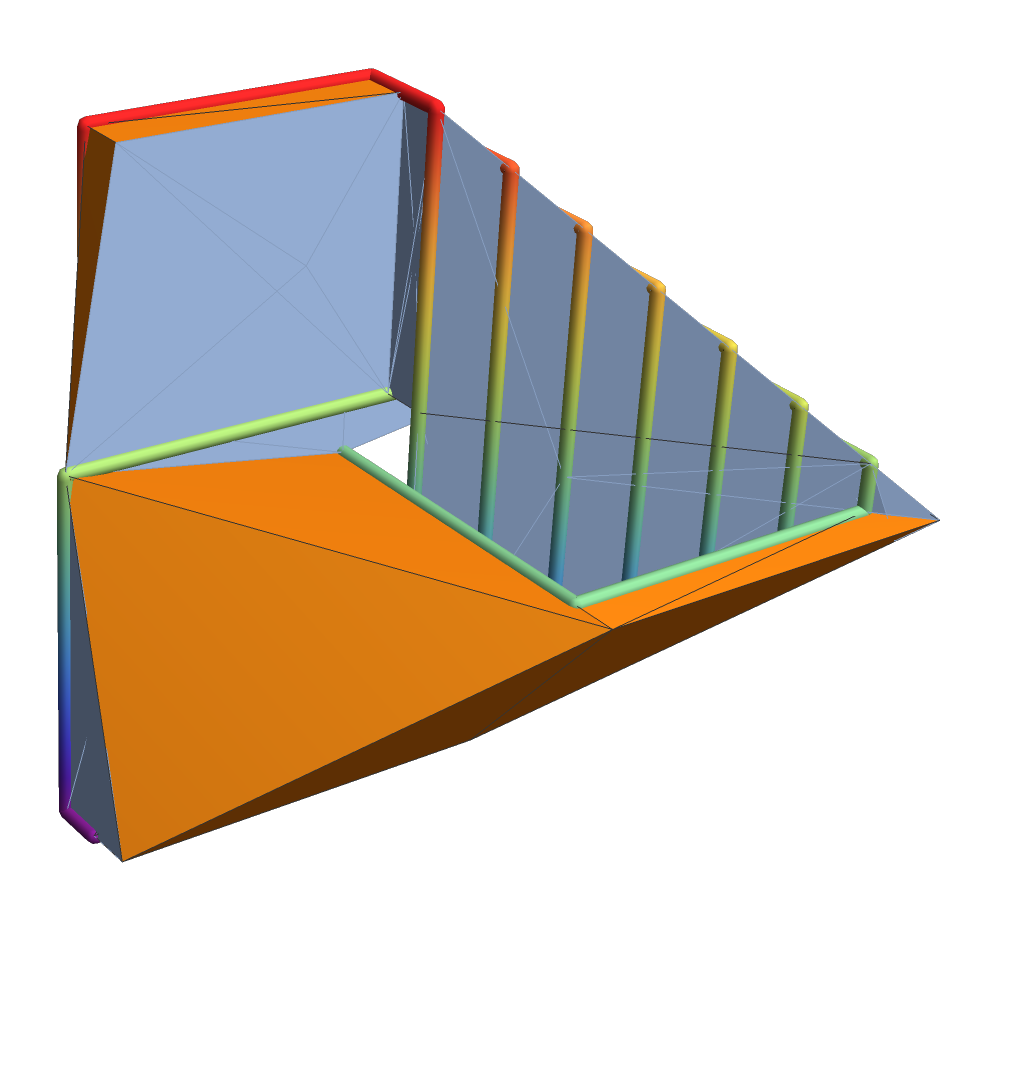}
\includegraphics[scale=.34]{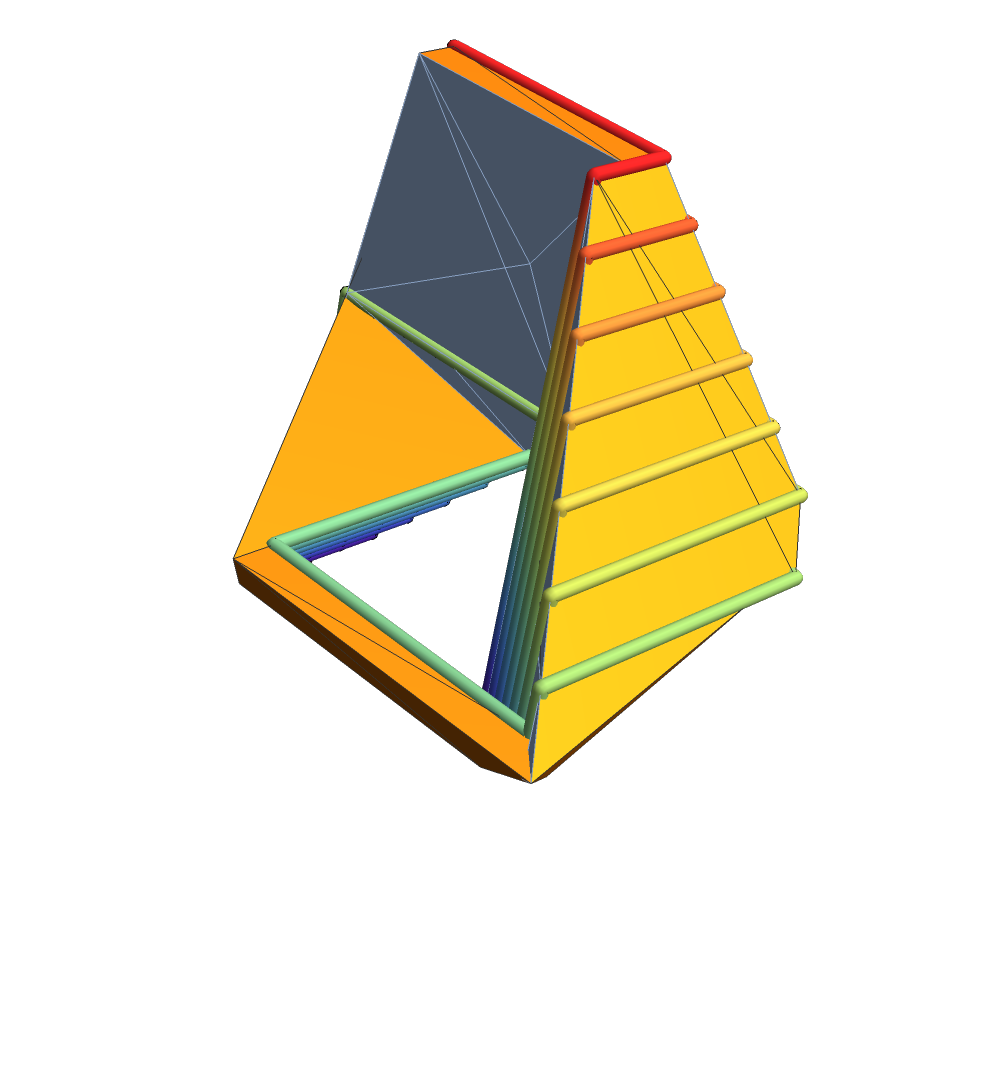}
\includegraphics[scale=.34]{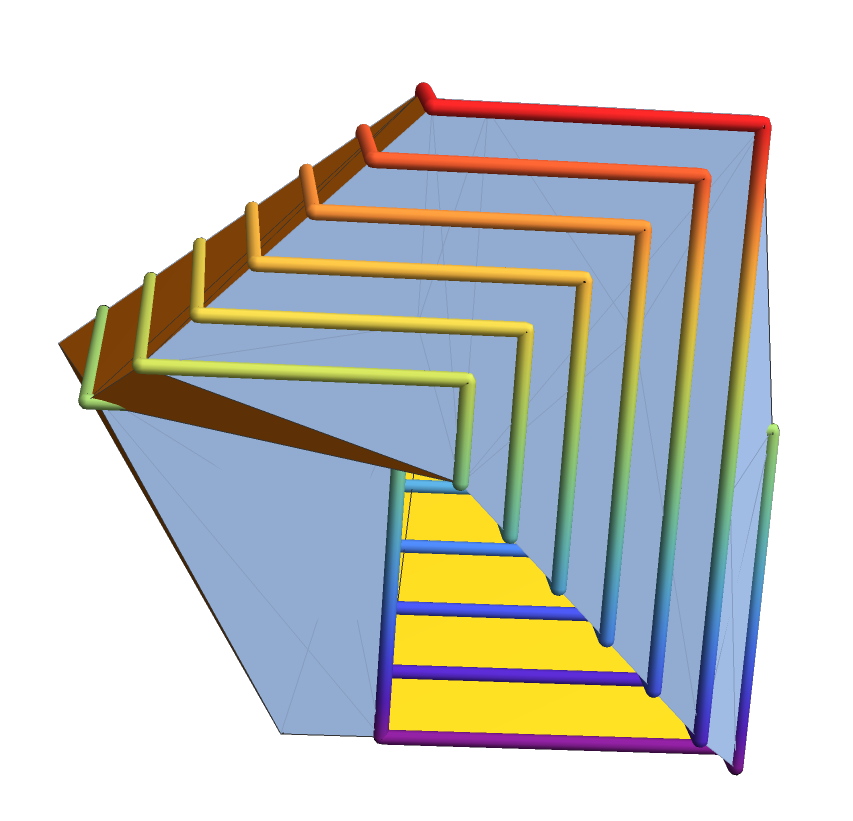}
\includegraphics[scale=.34]{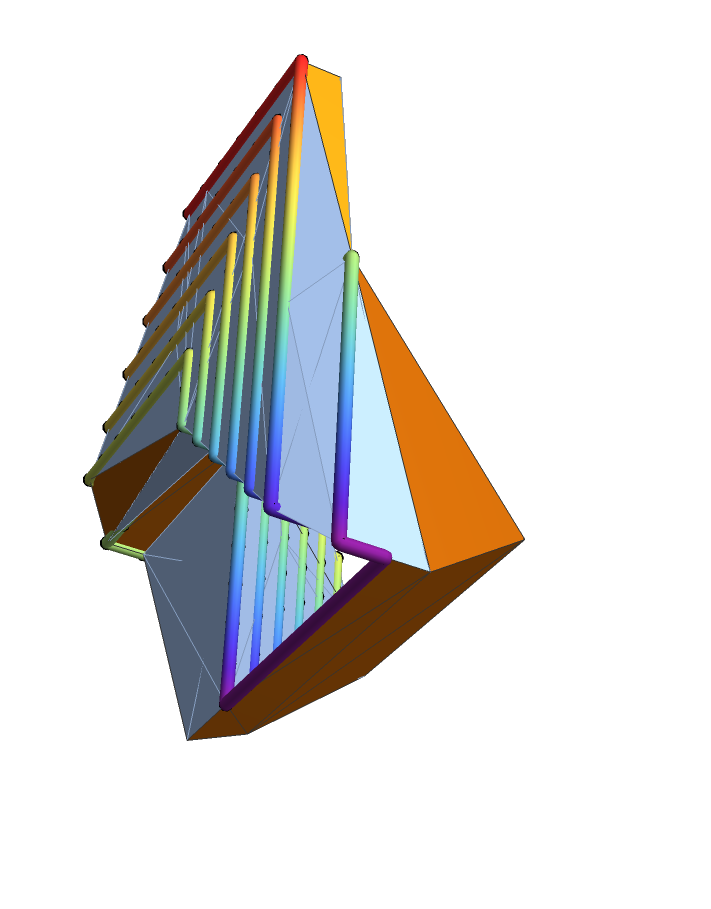}

\caption{Views of the constructed torus with embedded $\mathcal{K}_7$.}
\label{fig:embed}
\end{figure}

In determining the type of the torus knot, we draw the appropriate meridian on the constructed torus, Figure  \ref{fig:meridian}. The meridian meets the knot at each of the $p$ $y^-$-sticks. 

\begin{figure}[]

\includegraphics[scale=.9]{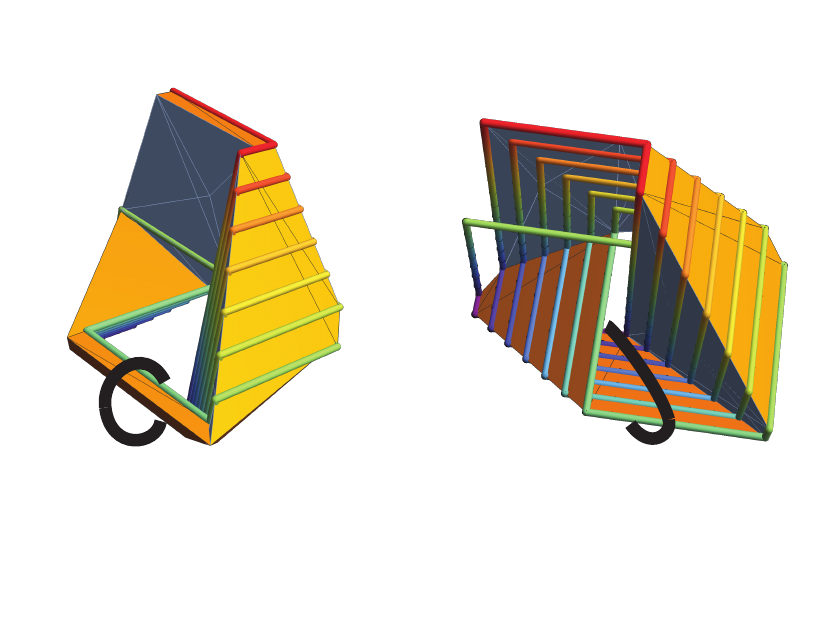}

\caption{Two views of a meridian drawn over the $\mathcal{K}_7$ knot embedded in the constructed polygonal torus and the knot with stick type faces.}
\label{fig:meridian}
\end{figure}

In Figure \ref{fig:longitude}, the longitude is drawn on the torus intersecting the $p$ $x^+$-sticks and the final $y^-$-stick.

\begin{figure}[]

\includegraphics[scale=1]{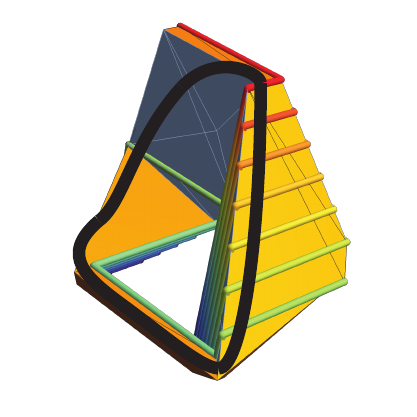}

\caption{Longitude drawn over the $\mathcal{K}_7$ knot embedded in the constructed polygonal torus.}
\label{fig:longitude}
\end{figure}

Altogether, $\mathcal{K}_p$ is a $(p,p+1)$ torus knot.

\end{proof}

\begin{theorem}
For $(p,p+1)$ torus knots, we have $s_{CL}([T_{p,p+1}])=6p$.
\end{theorem}

\begin{proof}

For any knot type,  $s_{CL}([K])\geq 6b[K]$, where $b[K]$ is the bridge index of $[K]$. The bridge index of a $(p,q)$ torus knot is $\min(p,q)$, so $s_{CL}([T_{p,p+1}])\geq 6p.$ Since we verified a lattice knot of $6p$ sticks to be a conformation of $T_{p,p+1}$, $s_{CL}([T_{p,p+1}])=6p.$

\end{proof}

\begin{corollary}
For each $p$, the lattice knot $\mathcal{K}_p$ realizes the minimal stick index $s_{CL}(\mathcal{K}_p)=s_{CL}([\mathcal{K}_p])=s_{CL}(T_{p.p+1}).$
\label{cor:min}
\end{corollary}

\section{Proof of Theorem \ref{thm:Unbounded}}
\label{section:main}

\begin{proposition}
For $p=2k\in 2\mathbb{Z}$, the vertex distortion of the lattice knot $\mathcal{K}_p$ from the tabulation given in Figure \ref{fig:knotsequence} satisfies \[ \delta_V(\mathcal{K}_p) \geq 9p^2/4+3p/2-1.\]
\label{prop:distort}
\end{proposition}

\begin{proof}
 Let $p=2k$. On the lattice knot $\mathcal{K}_p$, there is a point on the ${p/2}$ th $z^-$-stick that lies one away in space from a point on the final $y^+$-stick, see Figure \ref{fig:path}.

 \begin{figure}[]
\centering

\includegraphics[scale=.7]{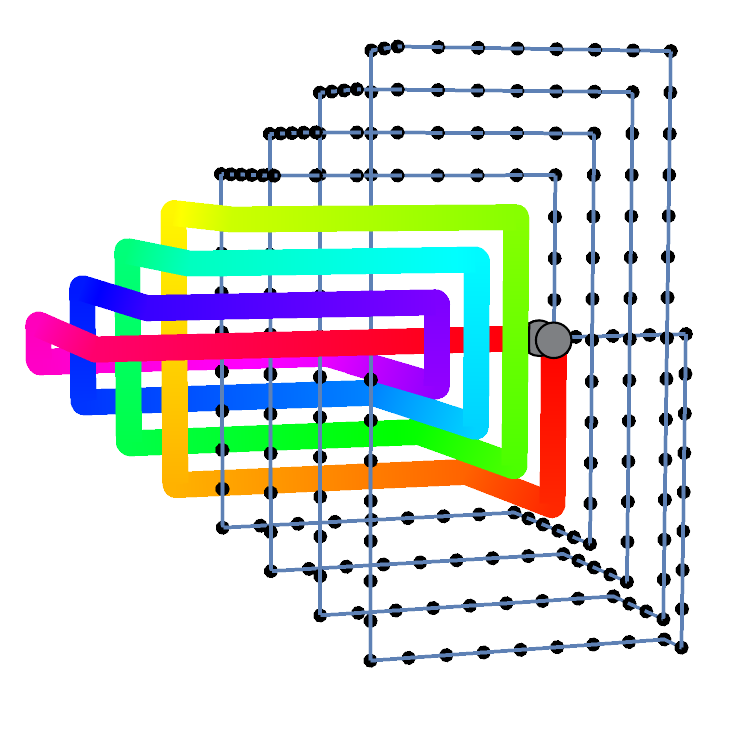}

\caption{The shortest path from the two points of $\mathcal{K}_8$ that realize its vertex distortion.}
\label{fig:path}
\end{figure}

 We will calculate the distortion value of this pair. Since the points lie one away in ambient space, the distortion value will be equal to the shorter of the two path lengths. Let us count the length of the path following the orientation, beginning at the point on the $z^-$-stick.

We begin with a partial $z^-$-stick of length $p/2$ and end with a $y^+$-stick of length $3p/2-1$. Of the remaining sticks, there are $p/2$, $y^-$-sticks; each of these sticks has a length of $p$. Thus, the total length of the $y^-$-sticks is \[p^2/2.\] 

There are $p/2-1$, $y^+$-sticks; each of these sticks has a length of $p-1$. Thus, the total length of the $y^+$-sticks is \[p^2/2-3p/2+1.\] 

The $x^+$-sticks have lengths following the sequence: $p/2+2, p/2+3, \dots, p/2+p/2, p$; this sequence has $p/2$ terms. The total length of the $x^+$-sticks is \[3 p^2/8 +3 p/4 -1.\]

The $x^-$-sticks have lengths following the sequence: $p/2+2, p/2+3, \dots, p/2+p/2, p/2+p/2+1$; this sequence has $p/2$ terms. The total length of the $x^-$-sticks is \[  3 p^2/8 + 3 p /4.\]

The length of the $z^-$ and $z^+$ sticks together follow the sequence: $p-1,p-2,\dots,3,2,1.$ The total length of the $z$-sticks is \[ p^2/2-p/2.\]

Therefore, the length of the path is \[ 9 p^2/4 +3p/2-1.\] Since the total length of the knot is $5p^2+3p-2$, Corollary \ref{cor:total} , the second path from one vertex to the other has a length of \[ 11 p^2/4+3p/2-1.\] This implies that the first path is the shorter of the two, and that \[9 p^2/4+3p/2-1\] is the distortion value of the two vertices. 

 This gives a lower bound for distortion on our $\mathcal{K}_p=\mathcal{K}_{2k}$ torus knots that increases quadratically as $p=2k$  increases. 

\end{proof}

\begin{proof}[Proof of Theorem \ref{thm:Unbounded}]
Theorem \ref{thm:torus} shows that $\mathcal{K}_{2k}$ is a $(2k,2k+1)$ torus knot. Corollary \ref{cor:min} shows that $\mathcal{K}_{2k}$ realizes the lattice-stick index of $[\mathcal{K}_{2k}]$, i.e. $\mathcal{K}_{2k}$ is a minimal lattice-stick number conformation. Proposition \ref{prop:distort} shows that $\mathcal{K}_{2k}$, as a sequence, satisfies \[\lim\limits_{k\to\infty} \delta_V(\mathcal{K}_{2k})=\infty.\]

%Recall that the vertex distortion of a lattice knot is defined as \[ \delta_V(K)=\max\limits_{{\bf a,b }\in V(K)} \frac{d_K({\bf a,b})}{d_1({\bf a,b})}.\]
%\noindent We will  find ${\bf a_{2k}} ,{\bf b_{2k}}\in V(T_{2k,2k+1})$ such that $ \lim\limits_{k\to \infty} \frac{d_K({\bf a_{2k},b_{2k}})}{d_1({\bf a_{2k},b_{2k}})}=\infty.$ This will prove Theorem \ref{thm:Unbounded} since \[ \frac{d_K({\bf a_{2k},b_{2k}})}{d_1({\bf a_{2k},b_{2k}})}\leq  \max\limits_{{\bf a,b}\in V(T_{2k,2k+1})} \frac{d_K({\bf a,b})}{d_1({\bf a,b})}=\delta_V(T_{2k,2k+1}).\] 
%
% We will analyze the distortion value of a pair of vertices in the aforementioned $T_{2k,2k+1}$ knots, see Figure \ref{fig:knotsequence} with $p=2k$. 

\end{proof}

% We have computed the distortion of all pairs for small $p$, in fact the distortion is equal to the lower bound for even $p$ less than 12; another formula, $11 p^2/4-p-11/4$, was verified through computation for odd $p$ less than 21. 
% 
%For even $p$ greater than 10, the distortion realizing points do not lie on the largest $y^+$-stick and $p/2$ th $z^-$-stick. For even $p$ greater than 12 and less than 24, we have verified that one distortion realizing point lies on the longest $y^+$-stick but the other lies on $p/2-1$ st $z^-$-stick.  
%
%There are only four sticks to add to the previous computation when counting the length of the one-larger-median pair of vertices. Subtracting this length from the total length of the knot gives the distortion lower bound of \[ 11 p^2/4-7p/2-5.\] This can be verified, computationally, to be the actual distortion for even $p$ greater than 10 and less than 24. 

%%%%%%%%%%%%%%%%%%%%%%%%%%%%%%%%%%%%%%%%%%%%%%%%%%%%%%%%%%%%%%%%%%%%%

\section{Conjectures}

The following conjectures naturally arise:  

\begin{conjecture}
$\lim\limits_{p,q\to\infty}\delta_V([T_{p,q}])\to \infty$
\end{conjecture}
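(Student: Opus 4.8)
The plan is to obtain a lattice analogue of the distortion lower bounds of Pardon \cite{Pardon_2011} and Blair--Campisi--Taylor--Tomova \cite{Blair_2020} and apply it to torus knots. This is genuinely needed: Theorem~\ref{thm:Unbounded} only controls the specific conformations $T_{p,p+1}$, whereas the conjecture concerns the invariant $\delta_V([T_{p,q}])$, an infimum over \emph{all} lattice conformations; moreover one cannot simply invoke the smooth case, since $d_1\ge d_2$ forces $\delta_V(K)\le \delta(K)$, the wrong direction for transferring a lower bound. Instead, the goal is an inequality of the form $\delta_V([K])\ge c\cdot \rho([K])$ for an absolute constant $c$ and a \emph{topological} invariant $\rho$ — representativity, in the sense of \cite{Pardon_2011}, or the bridge number — that is unbounded on the family $\{T_{p,q}\}$. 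Since $b(T_{p,q})=\min(p,q)$ and $\rho(T_{p,q})$ grows like $\min(p,q)$ (Schubert; \cite{Pardon_2011}), and since $\rho$ depends only on the knot type, such a bound gives $\delta_V([T_{p,q}])\ge c\cdot\rho(T_{p,q})\to\infty$, which is exactly the conjecture.

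The heart of the matter is to reconstruct Pardon's multiscale ball argument using $\ell_1$-balls and only the vertices of a lattice conformation. Fix a conformation $K$ with $\delta_V(K)=\delta$ and edge length $\ell=e_L(K)$, and for a vertex $v$ and $r>0$ set $B(v,r)=\{x : d_1(x,v)\le r\}$. Following the architecture of \cite{Pardon_2011}, the steps are: (i) a pigeonhole over dyadic scales $r=1,2,4,\dots$ to locate a scale at which the edge length of $K$ inside the annular region $B(v,2r)\setminus B(v,r)$ is small relative to $r$ times the number of strands crossing it; (ii) a topological input — for a suitable $t\in(r,2r)$, the octahedral $2$-sphere $\partial B(v,t)$ together with an arc of $K$ it bounds inside produces a surface that is compressible in the knot exterior unless $|K\cap\partial B(v,t)|\ge\rho([K])$, so at least $\rho([K])$ strands of $K$ cross the annulus; and (iii) a length-versus-distortion estimate: distinct strands crossing an annulus of $\ell_1$-width $\gtrsim r$ contain vertices at mutual $\ell_1$-distance $O(r)$ but joined along $K$ by paths of length $\gtrsim r$, and comparing $\sum d_K$ with $\delta\cdot\sum d_1$ over the many strands forces $\delta\gtrsim\rho([K])$.

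The main obstacle is the interplay of lattice discreteness, the non-smoothness of $\ell_1$-spheres, and the restriction to vertices. Three difficulties stand out. First, $\partial B(v,t)$ is an octahedron, and $K$ can meet it in an edge lying entirely in a face or pass through its cone points; one must choose $t$ (half-integral, or generic real and then rounded) so that $K$ meets $\partial B(v,t)$ transversally in interiors of edges, and convert ``number of transverse points'' into ``number of edges'' up to a bounded factor. Second, because $\delta_V$ compares only vertices, the estimates of step (iii) must be anchored at vertices of $K$; one needs a lemma that every strand of $K$ crossing an annulus of $\ell_1$-width $\gtrsim 1$ carries a vertex at controlled $\ell_1$-distance from the basepoint, and here the lattice metric actually helps, since $K$ meets coordinate hyperplanes at integer heights. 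Third, one must verify that $\rho([K])$ really bounds $|K\cap\partial B(v,t)|$ for the particular spheres produced — which for $T_{p,q}$ reduces to the standard fact that the essential torus carrying it forces $\min(p,q)$ intersections with any compressing disk — and then cite \cite{Pardon_2011} for the unbounded growth of $\rho(T_{p,q})$.

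As a fallback avoiding the general machinery, one could exploit the concrete geometry of the family: any lattice conformation of $T_{p,q}$ has lattice-stick number at least $6\min(p,q)$ (Section~\ref{section:torusverification}), hence edge length at least $6\min(p,q)$, and a Pardon-type sphere argument applied to an arbitrary such conformation should still locate two vertices whose $\ell_1$-distance stays bounded while their distance along $K$ grows with $\min(p,q)$ — the quantitative irreducibility arguments of Section~\ref{section:Irreducibility} suggest the right local picture. Turning either strategy into a theorem with explicit constants is what remains open.
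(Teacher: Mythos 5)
This statement is left as a \emph{conjecture} in the paper --- there is no proof of it there to compare against --- and your submission is likewise not a proof but a research program: you say yourself that ``turning either strategy into a theorem with explicit constants is what remains open.'' The genuine gaps are exactly the three steps you flag. In particular, step (ii) is not a citation but a theorem to be proved: Pardon's compressibility argument is carried out for round spheres and level sets of a carefully chosen function, and transplanting it to octahedral $\ell_1$-spheres, with $K$ possibly containing whole edges inside faces of $\partial B(v,t)$, requires a new transversality and counting argument, not an adjustment of constants. Step (iii) as stated is also incomplete: the pigeonhole over dyadic scales in \cite{Pardon_2011} is what converts ``many strands cross every annulus'' into a distortion bound, and you have not explained how the sum $\sum d_K$ over strands is controlled when only vertex pairs may be used --- the claim that ``the lattice metric actually helps'' is plausible but is precisely the lemma that is missing. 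As written, nothing is proved.

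That said, your framing obscures a much shorter route, and your remark that ``one cannot simply invoke the smooth case'' is too pessimistic. It is true that $d_1\ge d_2$ gives $\delta_V(K)\le\delta(K)$, but for lattice conformations a reverse inequality up to additive and multiplicative constants also holds, which is all a lower bound needs. Indeed: (a) any pair $p,q\in K$ lying on the same stick or on two sticks sharing a vertex satisfies $d_K(p,q)/d_2(p,q)\le\sqrt2$; (b) any two non-adjacent sticks of a lattice knot lie on distinct lattice lines and hence are at $\ell_2$-distance at least $1$ from each other, so every remaining pair has $d_2(p,q)\ge 1$; (c) moving such $p,q$ to their nearest vertices along $K$ changes $d_K$ by at most $1$ and increases $d_1$ by at most $1$, while $d_1\le\sqrt3\,d_2$. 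Combining these gives $\delta(K)\le (\sqrt3+1)\,\delta_V(K)+C$ for an absolute constant $C$. Since Pardon's theorem applies to arbitrary rectifiable embedded circles, $\delta(K)\ge c\,\rho([T_{p,q}])\ge c\min(p,q)$ for \emph{every} lattice conformation $K$ of $T_{p,q}$, and the displayed comparison pushes this through to $\delta_V([T_{p,q}])\to\infty$ without rebuilding any of the multiscale machinery in the $\ell_1$ setting. If you want to turn your proposal into a proof, I would develop (a)--(c) carefully rather than attempt the octahedral-sphere reconstruction.
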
 

\begin{conjecture}
$\delta_V([K])=1 \iff [K]=[U]$
\label{conj:2}
\end{conjecture}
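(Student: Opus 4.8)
\medskip
\noindent\textbf{Proof proposal.}
The implication $[K]=[U]\Rightarrow\delta_V([K])=1$ is Theorem~\ref{thm:distunknot}. The content is the converse, and by Theorem~\ref{thm:Unknot} every lattice conformation of a nontrivial knot already has $\delta_V(K)>1$ \emph{strictly}; so the whole difficulty is to rule out a sequence of conformations $K_n$ of a fixed $[K]\neq[U]$ with $\delta_V(K_n)\to1$. Equivalently, the plan is to find a universal constant $c>1$ with $\delta_V(K)\geq c$ for every lattice conformation of every nontrivial knot, which upon taking the infimum over conformations yields $\delta_V([K])\geq c>1$ and proves Conjecture~\ref{conj:2}.

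First I would make the proof of Theorem~\ref{thm:Unknot} quantitative. Let $K$ have $e_L(K)=2\ell$, let ${\bf v}\in V(K)$, and let ${\bf v}'$ be an antipodal vertex (Lemma~\ref{lem:even}), with arcs $r_1,r_2$ from ${\bf v}$ to ${\bf v}'$ each of edge length $\ell$. Splitting each $r_i$ into its forward and backward parts coordinatewise, as in the proof of Lemma~\ref{lem:staircasedistance}, shows that if $B_i$ is the total length of the backtracking sticks of $r_i$ then $d_1({\bf v},{\bf v}')=\ell-2B_1=\ell-2B_2$; call this common value $\ell-2B_{\bf v}$. Since $d_K({\bf v},{\bf v}')=\ell$ we get $\delta_V(K)\geq \ell/(\ell-2B_{\bf v})$, so the hypothesis $\delta_V(K)\leq c$ forces $B_{\bf v}\leq\tfrac{c-1}{4c}\,e_L(K)$ for \emph{every} vertex ${\bf v}$, i.e. every antipodal pair is joined by two arcs that are within a small fraction of being staircase walks. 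Running the argument of Theorem~\ref{thm:Unknot} through this approximation should show that each ${\bf v}$ lies within $\ell_\infty$-distance $B_{\bf v}$ of a corner of the minimal bounding box (Lemma~\ref{lem:bound}); for $c=1$ this is exactly the statement ``$K\subset\partial(\text{bounding box})\approx S^2$'' of Theorem~\ref{thm:Unknot}.

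The obstacle is that ``within distance $B_{\bf v}$ of a corner'' says nothing unless $B_{\bf v}$ is small relative to the \emph{size} of $K$, and $B_{\bf v}\leq\tfrac{c-1}{4c}e_L(K)$ does not control it against $\operatorname{diam}(K)$: the relative backtracking being small does not localize it. To get around this I would supply a genuinely topological distortion bound for nontrivial knots, in the spirit of \cite{Pardon_2011} and \cite{Blair_2020}: prove a lattice/vertex analogue of the Blair--Campisi--Taylor--Tomova inequality bounding $\delta_V(K)$ below by an increasing function of the bridge number (or representativity) of $[K]$, by running a thin-position argument on the coordinate-plane sweepouts $\{x=a\}$, $\{y=a\}$, $\{z=a\}$ --- whose intersections with a lattice knot are the easily controlled ``figure-$L$'' pictures of Section~\ref{section:Lattice Torus Knots} --- in place of the level sets of a generic Morse function. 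Since a nontrivial knot has bridge number at least $2$, even a crude such bound gives the required $c>1$.

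I expect the last step to be the main obstacle: transplanting a distortion-versus-bridge-number estimate from the smooth category to lattice knots with the supremum taken over the finite vertex set and measured in $d_1$. The smooth proofs exploit thin position, generic directions, and the freedom to slide points continuously along $K$; in the lattice one must instead work with the finitely many axis sweepouts and show that the combinatorial detour forced by a crossing in thin position is realized, up to a controlled factor, between two genuine \emph{vertices}. A softer fallback --- normalize a hypothetical sequence $K_n$ with $\delta_V(K_n)\to1$ to unit diameter, pass to a limit, and argue the limit lies on the boundary of its bounding box so $[K_n]$ is eventually trivial --- runs into the dual difficulty of excluding knotting that escapes into an arbitrarily thin neck.
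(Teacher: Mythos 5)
This statement is labeled a \emph{conjecture} in the paper: the authors prove only the direction $[K]=[U]\Rightarrow\delta_V([K])=1$ (Theorem~\ref{thm:distunknot}) and, in Proposition~\ref{prop:large}, derive a consequence (unbounded length of any minimizing sequence) of the hypothetical failure of the converse. There is no proof in the paper to compare yours against, and your proposal does not close the gap either. Your reduction is correct as far as it goes --- Theorem~\ref{thm:Unknot} only controls individual conformations, so the converse requires a uniform bound $\delta_V(K)\geq c>1$ over all conformations of all nontrivial knot types (or some other way to rule out $\delta_V(K_n)\to 1$) --- and your quantitative bookkeeping $d_1({\bf v},{\bf v}')=\ell-2B_i$, hence $B_{\bf v}\leq\tfrac{c-1}{4c}\,e_L(K)$, is arithmetically sound. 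But the two steps that would actually prove the conjecture are both left as ``should show'' items: (i) the claim that small relative backtracking places every vertex near a corner of the bounding box is not derived, and as you yourself observe it would not suffice anyway, since $B_{\bf v}$ is controlled only relative to $e_L(K)$ and not relative to the diameter, so the near-staircase condition does not localize the knotting; (ii) the lattice analogue of the Pardon or Blair--Campisi--Taylor--Tomova lower bound, which is the step that would deliver the universal constant $c$, is stated as a goal rather than proved, and transplanting those arguments to the finite vertex set with the $\ell_1$ metric is precisely the hard part.

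In short: the proposal is an honest and reasonable research plan whose first paragraph correctly identifies why the conjecture does not follow from Theorem~\ref{thm:Unknot}, but it contains no completed argument for the converse implication, and the paper itself leaves that implication open. You should present this as a strategy toward the conjecture, not as a proof.
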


\noindent Theorem \ref{thm:distunknot} gives the `` if " direction of Conjecture \ref{conj:2}.

\bibliographystyle{hplain}   
    \bibliography{VertexDistortion}

%\bibliographystyle{amsplain}
%\bibliography{surgery}

\end{document}